\newcommand{\bbC}{\mathbb{C}}
\newcommand{\bbD}{\mathbb{D}}
\newcommand{\bbN}{\mathbb{N}}
\newcommand{\bbR}{\mathbb{R}}
\newcommand{\calK}{\mathcal{K}}
\newcommand{\calL}{\mathcal{L}}
\newcommand{\suchthat}{\,|\,} 
\DeclareMathOperator{\one}{\mathbbm{1}} 
\newcommand{\argument}{\mathord{\,\cdot\,}} 
\newcommand{\dx}{\;\mathrm{d}} 
\DeclareMathOperator{\conv}{conv} 
\newcommand{\norm}[1]{\left\lVert #1 \right\rVert} 
\newcommand{\modulus}[1]{\left\lvert #1 \right\rvert} 
\newcommand{\inner}[2]{\left(#1 \mid #2\right)}
\newcommand{\impliesProof}[2]{``\ref{#1} $\Rightarrow$ \ref{#2}''}
\theoremstyle{definition}
\newtheorem{definition}{Definition}[section]
\newtheorem{remark}[definition]{Remark}
\newtheorem{remarks}[definition]{Remarks}
\newtheorem{example}[definition]{Example}
\newtheorem{examples}[definition]{Examples}
\newtheorem{open_problem}[definition]{Open Problem}
\theoremstyle{plain}
\newtheorem{proposition}[definition]{Proposition}
\newtheorem{lemma}[definition]{Lemma}
\newtheorem{theorem}[definition]{Theorem}
\newtheorem{corollary}[definition]{Corollary}
\numberwithin{equation}{section}
\begin{document}

\title[Increasing sequences in ordered Banach spaces]{Increasing sequences in ordered Banach spaces -- new theorems and open problems}
\author{Jochen Gl\"uck}
\address[J. Gl\"uck]{Bergische Universit\"at Wuppertal, Fakult\"at f\"ur Mathematik und Naturwissenschaften, Gaußstr.\ 20, 42119 Wuppertal, Germany}
\email{glueck@uni-wuppertal.de}
\subjclass[2020]{46B40}
\keywords{ordered Banach space; separable ordered Banach space; monotone sequence; Levi property; regulqar cone; order continuous norm; Dini's theorem}
\date{\today}
\begin{abstract}
	An ordered Banach space $X$ is said to have the Levi property or to be regular if every increasing order bounded net (equivalently, sequence) is norm convergent. We prove four theorems related to this classical concept: 
	
	(i) The Levi property follows from the -- formally weaker -- assumption that every increasing net that has a minimal upper bound is norm convergent. This motivates a discussion about in which sense the Levi property resembles the notion of order continuous norm from Banach lattice theory. 
	
	(ii) If $X$ is separable and has normal cone, then the assumption that every increasing order bounded sequence has a supremum implies the Levi property. This generalizes a classical result about Banach lattices, but requires new ideas since one cannot work with disjoint sequences in the proof.
	
	(iii) A version of Dini's theorem for ordered Banach spaces that is more general than what is typically stated in the literature. We use this to derive a sufficient condition for the space of all compact operators between two Banach lattices to have the Levi property.
	
	(iv) Dini's theorem never holds on reflexive ordered Banach spaces with non-normal cone -- i.e., on such a space one can always find an increasing sequence that converges weakly but not in norm.
	
	We illustrate our results by various examples and counterexamples and pose four open problems. 
\end{abstract}

\maketitle

\section{Introduction} 
\label{sec:introduction}

\subsection*{Ordered Banach spaces}

By a \emph{pre-ordered Banach space} we mean a real Banach space $X$ together with a closed \emph{wedge} $X_+$ in $X$, 
i.e.\ a closed non-empty subset $X_+ \subseteq X$ that satisfies $\alpha X_+ + \beta X_+ \subseteq X_+$ 
for all scalars $\alpha, \beta \in [0,\infty)$. 
The wedge $X_+$, sometimes also called the \emph{positive wedge} in $X$, 
induces a pre-order (i.e.\ a reflexive and transitive relation) $\le$ on $X$ 
that is compatible with the vector space structure and that is defined by $x \le y$ if and only if $y-x \in X_+$. 
Hence, the elements of $X_+$ are precisely the vectors $x \in X$ that satisfy $x \ge 0$ 
and we thus call them the \emph{positive} vectors in $X$.
The wedge $X_+$ is called \emph{pointed} or a \emph{cone} if it satisfies $X_+ \cap -X_+ = \{0\}$.  
The pre-order $\le$ is antisymmetric (and thus a partial order) if and only if $X_+$ is a cone. 
In this case, $X$ is called an \emph{ordered Banach space}. 
Throughout the paper we will mostly be interested in ordered Banach spaces, 
but for some situations -- in particular regarding duality -- it is more suitable 
to have the concept of pre-ordered Banach spaces available.

The importance of ordered Banach spaces stems from the fact that many Banach spaces that occur throughout mathematics 
and in various applied fields carry a natural order structure. 
This is, in particular, true for numerous function spaces and also for various non-commutative spaces 
such as the self-adjoint parts of $C^*$-algebras or non-commutative $L^1$-spaces. 

The major themes in the study of ordered Banach spaces are, on the one hand, 
the structure of the spaces themselves and, on the other hand, the analysis of positive operators between them.
Recent research topics about the structure of ordered Banach spaces include 
the study of non-lattice ordered spaces by embedding them into vector lattices \cite{KalauchMalinowski2019a, KalauchStennderVanGanns2021, KalauchVanGaans2019},
the order structure of Sobolev spaces \cite{AroraGlueckSchwenningerPreprint, PonceSpector2020}, 
and topological and metric properties of the decomposition of vectors 
as the difference of two positive vectors \cite{DeJeuMesserschmidt2014, Messerschmidt2019}.
Research about positive operators include results about the spectral theory and the long term behaviour 
of positive operators and semigroups \cite{GlueckWeber2020, LiJia2021, FonteSanchezGabrielMischlerPreprint}, 
the analysis of positive operators to describe quantum systems \cite{FidaleoOttomanoRossi2022, FidaleoVincenzi2023, HuangJaffeLiuWuPreprint, RajaramaBhatKarTalwar2023}, 
structure theorems for spaces of positive operators \cite{vanGaansKalauchRoelands2024}, 
as well as positive infinite-dimensional systems \cite{AroraGlueckPaunonenSchwenningerPreprint, ElGantouh2023, ElGantouh2024, ElGantouhPreprint} and positive perturbations \cite{BarbieriEngelPreprint, BatkaiJacobVoigtWintermayr2018}. 
The later two topics lead, in turn, back to questions 
about geometric properties of ordered Banachs spaces \cite{AroraGlueckPaunonenSchwenningerPreprint, AroraGlueckSchwenningerPreprint}.

\subsection*{Contributions} 

In this article we are interested in the norm convergence of increasing (or decreasing) sequences and nets in ordered Banach spaces; 
see the end of the introduction for a precise definition of those notions. 
In many examples of ordered Banach spaces every increasing net (equivalently sequence, see Theorem~\ref{thm:oc-norm}) 
that is order bounded from above converges in norm. 
This is called the \emph{Levi property} in parts of the literature; 
see Remark~\ref{rems:oc-norm}\ref{rems:oc-norm:itm:name} for references to this and various other names for the same property. 
We prove four results related to the Levi property. 

First, we show that the Levi property is equivalent to the formally weaker property 
that every increasing net which has a minimal upper bound 
converges in norm (Theorem~\ref{thm:oc-norm}). 
This gives rise to a comparison with the notion of \emph{order continuous norm} 
in Banach lattices, see in particular Remark~\ref{rems:oc-norm}\ref{rems:oc-norm:itm:oc-norm-discussion}.

Second, we prove that if an ordered Banach space with normal cone is separable (in the norm topology) 
and has the property that every order bounded increasing sequence has a supremum, 
then the space automatically has the Levi property (Theorem~\ref{thm:separable}). 
For the special case of Banach lattices, this is a classical and well-known result. 
The main challenge in the much more general setting of ordered Banach spaces is that one cannot argue with disjoint sequences. 

Third, we show a version of Dini's theorem in ordered Banach spaces with normal cone, 
which yields that, for increasing nets, a rather general version of weak convergence automatically implies norm convergence 
(Theorem~\ref{thm:dini}). 
The argument itself is classical, but our formulation of the result has the advantage that it can be used 
to give a sufficient condition for the space of all compact operators between two Banach lattices 
to have the Levi property (Corollary~\ref{cor:compact-operators-oc-norm}). 

Fourth, we prove that for ordered Banach spaces that are reflexive, 
the validity of Dini's theorem implies that the cone is normal (Theorem~\ref{thm:no-dini}).

Section~\ref{sec:bidual-wedge-pointed} contains a brief intermezzo regarding the question 
under which conditions the bidual wedge of an ordered Banach $X$ is pointed, i.e.\ a cone. 
This property is not trivial if the cone $X_+$ is not normal; we characterize 
the property in Proposition~\ref{prop:bidual-cone-dom}.
In Appendix~\ref{app:weak-star-dense} we prove by means of the bipolar theorem from the theory of locally convex spaces 
that the positive unit ball of a pre-ordered Banach space $X$
is weak* dense in the positive unit ball of the bidual space $X''$ (Theorem~\ref{thm:positive-ball-bidual}). 
This result is employed in the proof of Proposition~\ref{prop:bidual-cone-dom}. 

We illustrate our results with various examples and include four open problems;
those can be found in Open Problems~\ref{op:oc-norm}, \ref{op:sup-implies-normal},~\ref{op:dini-normal} and~\ref{op:dini-sequences}.

\subsection*{Notation and terminology} 

We use the convention $\bbN := \{1, 2, \dots\}$ and $\bbN_0 := \{0,1,2, \dots\}$.

The wedge $X_+$ in a pre-ordered Banach space $X$ is called \emph{generating} if its linear span $X_+ - X_+$ equals $X$; 
it is called \emph{total} if its linear span is dense in $X$. 
If $X_+$ is generating, then there exists a number $M \ge 0$ with the following property: 
every $x \in X$ can be written as $x = y-z$ for two vectors $y,z \in X_+$ 
that satisfy $\norm{y}, \norm{z} \le M \norm{x}$; see e.g.\ \cite[Proposition~1.1.2]{BattyRobinson1984} 
(note that in this reference, the notion \emph{cone} is used differently than in our paper: 
they call a cone what we call a wedge). 
The wedge $X_+$ is called \emph{normal} if there exists a constant $C \ge 0$ such that one has $\norm{x} \le C \norm{y}$ 
for all $x,y \in X$ that satisfy $0 \le x \le y$. 
Note that a normal wedge is automatically a cone.
For $x,z \in X$ the set $[x,z] := \{y \in X \suchthat x \le y \le z\}$ is called the \emph{order interval} 
between $x$ and $z$. 
Obviously, $[x,z]$ is non-empty if and only if $x \le z$. 
The wedge $X_+$ is normal if and only if every order interval is norm bounded 
(see e.g.\ \cite[Theorem~2.38(5)]{AliprantisTourky2007} for the case where $X_+$ is a cone; 
if $X_+$ is not a cone, than the order interval $[0]$ is a non-zero vector subspace of $X$ 
and is thus not norm bounded).

Now, let $X$ be an ordered Banach space.
A set $S \subseteq X$ is called \emph{order bounded} if it is contained in an order interval. 
It is called \emph{order bounded from above} 
if there exists a vector $z \in X$ such that $y \le z$ for all $y \in S$; 
in this case, any such vector $z \in X$ is called an \emph{upper bound} of $S$. 
An upper bound $z$ of $S$ is called a \emph{minimal} upper bound of $S$ 
if for every upper bounded $\tilde z$ of $S$ the inequality $\tilde z \le z$ implies $\tilde z = z$.
An upper bounded $z$ of $S$ is called a \emph{smallest upper bound} or \emph{supremum} of $S$ 
if every upper bound $\tilde z$ of $S$ satisfies $\tilde z \ge z$.
If a supremum of $S$ exists, then it is uniquely determined; 
moreover, the supremum of $S$ is then the only minimal upper bound of $S$.
On the other hand, a set $S$ that does not have a supremum can have more than one minimal upper bound. 
The notions \emph{order bounded from below}, \emph{lower bound}, 
\emph{maximal lower bound}, and \emph{greatest lower bound} (or \emph{infimum}) 
are defined analogously. 
Note that notions such as minimal upper bounds and suprema are not useful concepts in pre-ordered 
(rather than ordered) Banach spaces.

A linear map $T: X \to Y$ between two pre-ordered Banach spaces is called \emph{positive}, 
which we denote by $T \ge 0$, if $TX_+ \subseteq Y_+$. 
If the wedge $X_+$ in $X$ is generating and the wedge $Y_+$ is a cone, 
then every positive linear map $X \to Y$ is automatically continuous 
(see e.g.\ \cite[Theorem~2.32]{AliprantisTourky2007} or \cite[Theorem~2.8]{ArendtNittka2009}). 
We denote the space of bounded linear operators between two Banach spaces $X$ and $Y$ by $\calL(X;Y)$; 
its subspace of all compact operators is denoted by $\calK(X;Y)$. 
We write $X'$ for the norm dual of a Banach space $X$. 
For $x' \in X'$ and $x \in X$ we set $\langle x', x \rangle := x'(x)$; 
when it seems appropriate to explicitly indicate the spaces $X$ and $X'$ 
we sometimes write $\langle x', x\rangle_{\langle X', X \rangle}$ instead.

Let $X$ be a pre-ordered Banach space. 
It follows from the preceding definition of positive operators that a functional $x' \in X' = \calL(X; \bbR)$ 
is positive if and only if it maps $X_+$ into $[0,\infty)$. 
We denote the set of all positive elements of $X'$ by $X'_+$. 
The set $X'_+$ is a weak* closed wedge in $X'$. 
In particular, it is norm closed in $X'$, so it turns $X'$ into a pre-ordered Banach space.
The dual wedge $X'_+$ is a cone 
if and only if the wedge $X_+$ in $X$ is total. 
It follows from the Hahn--Banach separation theorem that a vector $x \in X$ 
is contained in $X_+$ if and only if $\langle x', x \rangle \ge 0$ for all $x' \in X'_+$. 
A functional $x' \in X'$ is called \emph{strictly positive} 
if $\langle x', x \rangle > 0$ for every $0 \not= x \in X_+$.
By taking the dual wedge of the dual wedge one obtains the \emph{bidual wedge} $X''_+$ in the bidual space $X''$.
The wedge $X_+$ is normal if and only if the dual wedge is generating, i.e.\ $X'_+ - X'_+ = X'$ 
\cite[Theorem~4.5]{KrasnoselskijLifshitsSobolev1989}. 
Similarly, the wedge $X_+$ is generating if and only if $X'_+$ is normal  
\cite[Theorem~4.6]{KrasnoselskijLifshitsSobolev1989}.

Let $X$ be an ordered Banach space again.
A net $(x_j)_{j \in J}$ is called \emph{increasing} 
if $x_j \le x_k$ for all $j,k \in J$ that satisfy $j \le k$. 
Similarly, the net is called \emph{decreasing} if $x_j \ge x_k$ whenever $j \le k$. 
For a net $(x_j)_{j \in J}$ in $X$, when we speak of notions such as 
\emph{order bounded from above}, \emph{upper bound}, \emph{minimal upper bound}, or \emph{supremum}, 
we actually mean the same notions for the set $\{x_j \suchthat j \in J\}$.
If an increasing net $(x_j)_{j \in J}$ converges weakly (or even in norm) 
to a point $x \in X$, then it is not difficult to check that $x$ is the supremum of the net.

\section{Norm convergence of increasing sequences and nets} 
\label{sec:increasing}

A Banach lattice $X$ is said to have order continuous norm if every increasing net in $X$ 
which has a supremum converges in norm (to the supremum). 
Somewhat surprisingly at first glance, this is equivalent to the formally much stronger property 
that every increasing net in $X$ that is order bounded from above converges in norm. 
Formally, the second property is equivalent to the first one plus Dedekind completeness, 
but remarkably it turns out that the Dedekind completeness comes for free from the first property. 
In the following theorem we consider the situation in the more general setting of ordered Banach spaces, 
which turns out to be subtler. 
The main difference is that, in~\ref{thm:oc-norm:itm:net-with-min-upper-bound}, we require norm convergence 
of every increasing net which has a minimal upper bound rather than only for those increasing nets which have a smallest upper bound 
(i.e., a supremum). 

\begin{theorem}
	\label{thm:oc-norm}
	Let $X$ be an ordered Banach space. 
	The following are equivalent:
	\begin{enumerate}[label=\upshape(\roman*)]
		\item\label{thm:oc-norm:itm:sequence} 
		Every increasing sequence in $X$ that is order bounded from above is norm convergent, 
		i.e.\ $X$ has the \emph{Levi property}.
		
		\item\label{thm:oc-norm:itm:net} 
		Every increasing net in $X$ that is order bounded from above is norm convergent. 
		
		\item\label{thm:oc-norm:itm:net-with-min-upper-bound} 
		Every increasing net in $X$ that has a minimal upper bound is norm convergent.
	\end{enumerate}
	If the equivalent conditions~\ref{thm:oc-norm:itm:sequence}--\ref{thm:oc-norm:itm:net-with-min-upper-bound} are satisfied, 
	then the cone $X_+$ is normal.
\end{theorem}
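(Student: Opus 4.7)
The approach is a cycle of implications plus a separate derivation of normality. The implications \ref{thm:oc-norm:itm:net} $\Rightarrow$ \ref{thm:oc-norm:itm:sequence} and \ref{thm:oc-norm:itm:net} $\Rightarrow$ \ref{thm:oc-norm:itm:net-with-min-upper-bound} are immediate from the definitions: every sequence is a net, and a minimal upper bound is in particular an upper bound. For \ref{thm:oc-norm:itm:sequence} $\Rightarrow$ \ref{thm:oc-norm:itm:net}, I would use a subsequence extraction. Given an increasing net $(x_j)_{j \in J}$ order bounded above by some $b$, suppose it fails to be norm Cauchy. Then there is $\epsilon > 0$ such that for every $j_0 \in J$ one finds $j, k \ge j_0$ with $\norm{x_j - x_k} \ge \epsilon$; passing to a common successor $l \ge j, k$ in $J$ and using the triangle inequality, at least one of $\norm{x_l - x_j}, \norm{x_l - x_k}$ is $\ge \epsilon/2$. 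Recursively, this yields indices $j_1 \le k_1 \le j_2 \le k_2 \le \ldots$ in $J$ with $\norm{x_{k_n} - x_{j_n}} \ge \epsilon/2$. The interleaved sequence $x_{j_1}, x_{k_1}, x_{j_2}, x_{k_2}, \ldots$ is then increasing, bounded above by $b$, and fails to be Cauchy, contradicting \ref{thm:oc-norm:itm:sequence}.

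The main difficulty is the implication \ref{thm:oc-norm:itm:net-with-min-upper-bound} $\Rightarrow$ \ref{thm:oc-norm:itm:sequence}. My strategy is to show that, under \ref{thm:oc-norm:itm:net-with-min-upper-bound}, every increasing sequence $(x_n)$ bounded above by some $b$ admits a minimal upper bound, so that \ref{thm:oc-norm:itm:net-with-min-upper-bound} applied to $(x_n)$ itself delivers norm convergence. For this, I would apply Zorn's lemma to $U := \bigcap_n [x_n, b]$ ordered by $\le$: the task reduces to showing that every decreasing chain $(z_\alpha)$ in $U$ admits a lower bound in $U$. Reversing such a chain yields an increasing net $(-z_\alpha)$ bounded above by $-x_1$, and if it has a minimal upper bound then \ref{thm:oc-norm:itm:net-with-min-upper-bound} supplies a norm limit whose negative is an infimum of $(z_\alpha)$ lying in $U$. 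The principal obstacle is exactly this bootstrap: the assumption one needs on the reversed chain is of the same form as what is being established. I would handle this by passing to a well-ordered maximal chain and performing a transfinite induction, using the already-established equivalence \ref{thm:oc-norm:itm:sequence} $\Leftrightarrow$ \ref{thm:oc-norm:itm:net} to dispose of countable cofinal sub-chains.

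For the concluding assertion that $X_+$ is normal, I would argue by contradiction via a series construction. If $X_+$ were not normal, one can choose $u_n, v_n \in X_+$ with $0 \le u_n \le v_n$, $\norm{v_n} \le 2^{-n}$, and $\norm{u_n} \ge n$. Then $y := \sum_{n \ge 1} v_n$ converges in norm to an element of $X_+$, and the partial sums $S_n := \sum_{k=1}^n u_k$ form an increasing sequence satisfying $S_n \le \sum_{k=1}^n v_k \le y$. By \ref{thm:oc-norm:itm:sequence}, the sequence $(S_n)$ is norm convergent, which forces $u_n = S_n - S_{n-1} \to 0$ in norm---contradicting $\norm{u_n} \ge n$.
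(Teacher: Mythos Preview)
Your arguments for \ref{thm:oc-norm:itm:sequence} $\Rightarrow$ \ref{thm:oc-norm:itm:net}, for the two trivial implications, and for normality are all fine and essentially match the paper (which cites \cite{AliprantisTourky2007} for normality but your direct argument is the standard one).

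The gap is in \ref{thm:oc-norm:itm:net-with-min-upper-bound} $\Rightarrow$ \ref{thm:oc-norm:itm:sequence}. You correctly identify the bootstrap problem---to run Zorn on the set $U$ of upper bounds you need decreasing chains in $U$ to have lower bounds in $U$, which amounts to the very convergence property you are trying to establish---but your proposed resolution does not close the circle. The equivalence \ref{thm:oc-norm:itm:sequence} $\Leftrightarrow$ \ref{thm:oc-norm:itm:net} is of no help here: at this stage neither \ref{thm:oc-norm:itm:sequence} nor \ref{thm:oc-norm:itm:net} is known to hold, so you cannot invoke either to handle a countable cofinal subchain. Concretely, at a limit stage of countable cofinality you obtain a decreasing \emph{sequence} $(z_n)$ in $U$; the reversed sequence $(-z_n)$ is increasing and order bounded, but you have no way to conclude it converges (that is \ref{thm:oc-norm:itm:sequence}, not yet available) and no way to exhibit a minimal upper bound for it (that would require Zorn on its set of upper bounds, reproducing the same obstruction). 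The transfinite induction stalls at the first limit ordinal.

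The paper sidesteps the circularity by applying Zorn to a different poset. Rather than seeking a minimal \emph{element} of $U$, one takes a maximal \emph{downward-directed subset} $V \subseteq U$; this exists unconditionally because a union of an inclusion-chain of downward-directed sets is again downward-directed. One then forms the decreasing net $(v - x_j)_{(j,v) \in J \times V}$ and shows directly that $0$ is a maximal lower bound: if $\ell \ge 0$ is a lower bound then $V - \ell \subseteq U$ and $V \cup (V-\ell)$ is still downward-directed, so maximality of $V$ forces $V - \ell \subseteq V$, whence $v - n\ell \in U$ for all $n$ and closedness of the cone gives $\ell = 0$. Now \ref{thm:oc-norm:itm:net-with-min-upper-bound} applies to this auxiliary net, yielding $v - x_j \to 0$, from which one reads off that $(x_j)$ is Cauchy. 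The point is that Zorn is used only to produce the index set $V$, not to locate a minimal upper bound; the minimal-lower-bound verification for the combined net is done by hand using the Archimedean property of the closed cone.
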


Before we give the proof, a number of remarks are in order. 

\begin{remarks}
	\label{rems:oc-norm}
	\begin{enumerate}[label=(\alph*)]
		\item\label{rems:oc-norm:itm:positive-only} 
		In assertions~\ref{thm:oc-norm:itm:sequence} and~\ref{thm:oc-norm:itm:net} of the theorem, 
		it suffices to consider sequences and nets in $X_+$, respectively. 
		To see this for~\ref{thm:oc-norm:itm:sequence} one can simply substract the first element of the sequence from the entire sequence. 
		To use the same argument for~\ref{thm:oc-norm:itm:net} one first has to switch to a tail of the net.
		
		\item\label{rems:oc-norm:itm:name} 
		As indicated in the theorem, assertion~\ref{thm:oc-norm:itm:sequence} is called the \emph{Levi property}  
		in \cite[Definition~2.44(2) on p.\,89]{AliprantisTourky2007}.
		In \cite[Definition~5.1]{KrasnoselskijLifshitsSobolev1989} 
		the terminology \emph{regular cone} is used for this property. 
		In \cite[Definition~3.4]{ArendtDanersPreprint} the property is called \emph{order continuous norm}; 
		see part~\ref{rems:oc-norm:itm:oc-norm-discussion} of this remark for further discussion of this terminology.
		
		\item\label{rems:oc-norm:itm:lattice-case} 
		Assume that $X$ is a Banach lattice. 
		Then minimal upper bounds of sets are automatically smallest upper bounds, 
		so assertion~\ref{thm:oc-norm:itm:net-with-min-upper-bound} can be reformulated by saying that 
		every increasing net in $X$ that has supremum is norm convergent (to its supremum).
		This property is typically referred to as \emph{order continuity of the norm} of $X$ in Banach lattice theory, 
		see e.g.\ \cite[Definition~2.4.1]{MeyerNieberg1991}
		The equivalence of all three assertions in the theorem is a classical result for Banach lattices, 
		see for instance \cite[Theorem~2.4.2]{MeyerNieberg1991} or \cite[Theorem~II.5.10]{Schaefer1974}.
				
		\item\label{rems:oc-norm:itm:sequence-do-not-suffice} 
		The assertions of Theorem~\ref{thm:oc-norm} are not equivalent 
		to the weaker property that every increasing sequence that has a minimal upper bound is norm convergent 
		-- even for Banach lattices (where a minimial upper bound is the same as a supremum) 
		a simple counterexample can be found in \cite[Example~6 on p.\,46]{Wnuk1999}.
		
		\item\label{rems:oc-norm:itm:oc-norm-discussion} 
		We do not know whether, for general ordered Banach spaces, assertion~\ref{thm:oc-norm:itm:net-with-min-upper-bound} in the theorem 
		can be replaced with the formally weaker property 
		that every increasing net that has a supremum is norm convergent; 
		see Open Problem~\ref{op:oc-norm}. 
		If this is true, this would give a strong semantic justification to also call the equivalent properties of the theorem 
		\emph{order continuous norm}, as in the Banach lattice case. 
		If it is not true though, it appears to be not completely clear 
		how suitable the terminology \emph{order continuous norm} 
		actually is for the equivalent conditions in Theorem~\ref{thm:oc-norm}.
	\end{enumerate}
\end{remarks}
		
\begin{proof}[Proof of Theorem~\ref{thm:oc-norm}]
	It is known that~\ref{thm:oc-norm:itm:sequence} implies that the cone $X_+$ is normal, 
	see \cite[Theorem~2.45(2)$\Rightarrow$(3) on pp.\,89--90]{AliprantisTourky2007}. 
	We now show the equivalence of~\ref{thm:oc-norm:itm:sequence}--\ref{thm:oc-norm:itm:net-with-min-upper-bound} 
	(without making use of normality of the cone).
	
	\impliesProof{thm:oc-norm:itm:sequence}{thm:oc-norm:itm:net}
	Assume that~\ref{thm:oc-norm:itm:net} fails, 
	i.e.\ that there exists an increasing net $(x_j)_{j \in J}$ in $X$ that is order bounded from above but not norm convergent. 
	Since $X$ is norm complete this implies that $(x_j)_{j \in J}$ is not Cauchy. 
	Hence, we can find a number $\varepsilon > 0$ such that for every $j_0 \in J$ there exists $j_1 \ge j_0$ 
	which satisfies $\norm{x_{j_1}-x_{j_0}} \ge \varepsilon$. 
	Due to this property one can find an increasing sequence $(y_n)_{n \in \bbN}$ which consists of vectors from the net $(x_j)_{j \in J}$ 
	(but which might not be a subnet thereof) and which satisfies $\norm{y_{n+1} - y_n} \ge \varepsilon$ for each $n$. 
	Hence the sequence is not Cauchy and therefore not norm convergent, 
	which shows that~\ref{thm:oc-norm:itm:sequence} fails. 
		
	\impliesProof{thm:oc-norm:itm:net}{thm:oc-norm:itm:sequence} 
	This implication is obvious.
	
	\impliesProof{thm:oc-norm:itm:net}{thm:oc-norm:itm:net-with-min-upper-bound}
	This implication is obvious.
	
	\impliesProof{thm:oc-norm:itm:net-with-min-upper-bound}{thm:oc-norm:itm:net} 
	Assume that~\ref{thm:oc-norm:itm:net-with-min-upper-bound} holds 
	and let $(x_j)_{j \in J}$ be an increasing net that is order bounded from above. 
	Let $\emptyset \not= U \subseteq X$ denote the set of all upper bounds of $(x_j)_{j \in J}$ 
	and let $V$ be maximal with respect to inclusion among all downwards directed sets in $U$. 
	Such a set $V$ exists by Zorn's lemma. 
	The set $J \times V$ becomes a directed set if we endow $V$ with the direction opposite to its order inherited from $X$ 
	and $V \times J$ with the product direction. 
	The net $(v - x_j)_{(j,v) \in V \times J}$ is decreasing and bounded below by $0$. 
	
	Let us show that $0$ is a maximal lower bound of this net. 
	To this end, let $\ell \in X$ be a lower bound of $(v - x_j)_{(j,v) \in V \times J}$ and assume that $\ell \ge 0$. 
	Then $v - \ell \ge x_j$ for each $v \in V$ and each $j \in J$, 
	so $v - \ell \in U$ for each $v \in V$. 
	The set $V \cup (V-\ell)$ is downwards directed since $\ell \ge 0$ and is contained in $U$, 
	so we conclude from the maximality of $V$ that $V \cup (V-\ell) = V$ and hence, 
	$V-\ell \subseteq V$.  
	Now fix an arbitrary point $v \in V$. 
	Then it follows iteratively that $v - n\ell \in V \subseteq U$ for each $n \in \bbN$. 
	For an arbitrary index $j \in J$ we thus have $v - x_j \ge n \ell$ for each $n \in \bbN$. 
	As the cone $X_+$ is closed we conclude that $0 \ge \ell$ and hence $\ell = 0$. 
	So $0$ is indeed a maximal lower bound of $(v - x_j)_{(j,v) \in V \times J}$. 
	
	It thus follows from~\ref{thm:oc-norm:itm:net-with-min-upper-bound} that $(v - x_j)_{(j,v) \in V \times J}$ 
	is norm convergent and clearly, the limit is the infimum of the net and thus coincides with the maximal lower bound $0$. 
	We now show that $(x_j)_{j \in J}$ is Cauchy and hence norm convergent, as $X$ is a Banach space. 
	Let $\varepsilon > 0$. 
	As we have just shown, there exists an $j_0 \in J$ and a $v_0 \in V$ such that $\norm{v-x_j} \le \varepsilon$ 
	for all $j \ge j_0$ and all $v \le v_0$ (where the latter inequality refers to the order in $X$). 
	For each $j \ge j_0$ this implies that
	\begin{align*}
		\norm{x_j - x_{j_0}} 
		\le 
		\norm{x_j - v_0} + \norm{v_0 - x_{j_0}} 
		\le 
		2\varepsilon,
	\end{align*}
	so $(x_j)_{j \in J}$ is indeed a Cauchy net, as claimed.
\end{proof}

\begin{examples}
	\label{exas:almost-oc-norm}
	Each of the following ordered Banach spaces $X$ satisfies the equivalent conditions 
	of Theorem~\ref{thm:oc-norm}:
	\begin{enumerate}[label=(\alph*)]
		\item 
		Every reflexive ordered Banach space with normal cone. 
		Indeed, in a reflexive ordered Banach space every increasing order bounded sequence 
		is weakly convergent. 
		If the cone is normal, a Dini type result (see Corollary~\ref{cor:dini-weak-conv} below) implies 
		that a weakly convergent increasing sequence is automatically norm convergent.
		
		\item 
		More generally than in~(a), all ordered Banach spaces in which every order interval is weakly compact 
		(again, this follows from the version of Dini's theorem in Corollary~\ref{cor:dini-weak-conv} below).
		This class includes every $L^1$-space and, more generally, 
		(the self-adjoint part of) every pre-dual of a von Neumann algebra, 
		see e.g.\ \cite[Theorem~III.5.4(i) and~(iii)]{Takesaki1979}. 
		
		\item 
		As a special case of the previous point, all ordered Banach spaces in which every order interval is compact. 
		For a characterization of such spaces, see \cite{ChaudharyAtkinson1978}.
		This class includes, for instance, the space of self-adjoint compact operators on a complex Hilbert space,
		endowed with the cone of positive-semidefinite operators 
		(see for instance \cite[Lemma~7.3]{GlueckGrohPreprint} for a proof). 
		The class also includes (the self-adjoint part of) every pre-dual of an atomic von Neumann algebra, 
		see \cite[Definition~III.5.9 and Corollary~III.5.11]{Takesaki1979}.
		
		\item 
		Every ordered Banach space on which the norm is additive on the positive cone, 
		meaning that $\norm{x+y} = \norm{x} + \norm{y}$ for all $x,y \ge 0$. 
		Indeed, the cone in such a space is clearly normal and 
		hence every increasing ordered bounded sequence $(x_n)$ is norm bounded. 
		One can then use the additivity of the norm cone to check that $(x_n)$ is a Cauchy sequence 
		and thus convergent.
		
		This class of spaces includes, for instance, all $L^1$-spaces, the space of finite measures 
		over any given $\sigma$-algebra (endowed with the total variation norm), 
		and all non-commutative $L^1$-spaces 
		(for instance, the space of self-adjoint trace class operators on a complex Hilbert space 
		with the trace norm and the cone of positive semi-definite operators).
		
		\item 
		As pointed out in Remark~\ref{rems:oc-norm}, 
		every Banach lattice with order continuous norm 
		-- for instance the space $c_0$ of real-valued sequences that converge to $0$ 
		(this class is also included in the class described in~(b)).
	\end{enumerate}
\end{examples}

Vector-valued $L^p$-spaces constitute a further class of examples that satisfy the equivalent conditions in Theorem~\ref{thm:oc-norm}: 

\begin{example}
	\label{exa:bochner}
	Let $X$ be an ordered Banach spaces, let $(\Omega,\mu)$ be a measure space, 
	and let $p \in [1,\infty)$. 
	If $X$ satisfies the equivalent conditions in Theorem~\ref{thm:oc-norm}, 
	then the same is true for the Bochner space $L^p(\Omega,\mu; X)$. 
	Here, $L^p(\Omega,\mu;X)$ is endowed with the pointwise almost everywhere order induced from $X$.
\end{example}

\begin{proof}
	We show that $L^p(\Omega,\mu;X)$ satisfies condition~\ref{thm:oc-norm:itm:sequence} in Theorem~\ref{thm:oc-norm}. 
	It suffices to do so for sequences in the positive cone only. 
	So let $(f_n)$ be an increasing sequence of functions in $L^p(\Omega,\mu;X)$ 
	and assume that there exists a function $h \in L^p(\Omega,\mu;X)$ such that $0 \le f_n \le h$ for all indices $n$. 
	After changing all the involved functions on a set of measure $0$ if necessary, 
	we may assume that the sequence $(f_n(\omega))$ in $X$ is increasing and located in the order interval $[0,h(\omega)] \subseteq X$ 
	for all $\omega \in \Omega$. 
	As $X$ satisfies condition~\ref{thm:oc-norm:itm:sequence} in Theorem~\ref{thm:oc-norm} 
	it follows that the sequence $(f_n(\omega))$ norm converges to a vector $f(\omega) \in [0, h(\omega)]$ for each $\omega \in \Omega$. 
	The function $f$ is strongly measurable as the pointwise limit of a sequence of strongly measurable functions. 
	
	Since the cone $X_+$ is normal according to Theorem~\ref{thm:oc-norm:itm:sequence}, 
	there exists a constant $C \ge 1$ such that $\norm{x} \le C \norm{y}$ for all $x,y \in X$ that satisfy $0 \le x \le y$. 
	Hence, one has $\int_\Omega \norm{f}^p \dx \mu \le C^p \int_\Omega \norm{h}^p \dx \mu < \infty$, so $f \in L^p(\Omega,\mu;X)$. 
	Moreover, $\norm{f(\omega)-f_n(\omega)}^p \le C^p \norm{h(\omega)}^p$ for all $\omega \in \Omega$, 
	so it follows from the dominated convergence theorem that $\norm{f-f_n}_{L^p} \to 0$ as $n \to \infty$.
\end{proof}

In the special case $p=1$ and $\Omega = [0,\infty)$ with the Lebesgue measure, 
the claim from Example~\ref{exa:bochner} was shown in \cite[Lemma~3.6]{ArendtDanersPreprint} by the same argument. 

It is natural to ask whether condition~\ref{thm:oc-norm:itm:net-with-min-upper-bound} in Theorem~\ref{thm:oc-norm} 
can be weakened as follows.

\begin{open_problem}
	\label{op:oc-norm}
	Let $X$ be an ordered Banach space and assume that every increasing net in $X$ that has a supremum $x \in X$ 
	is norm convergent to $x$. 
	Does it follow that $X$ satisfies the equivalent assertions of Theorem~\ref{thm:oc-norm}?
\end{open_problem}

\begin{remark}
	\label{rem:oc-norm-normal}
	The assumption of Open Problem~\ref{op:oc-norm} implies that the cone $X_+$ is normal. 
	More generally, if every increasing sequence in $X$ that has a supremum is norm convergent, 
	then $X_+$ is normal. 
	
	Indeed, if the cone is not normal, then there exists a vector $x \in X_+$ such that the order interval $[0,x]$ is not norm bounded 
	\cite[Theorem~2.40(1) and~(4) on p.\,87]{AliprantisTourky2007}.
	Thus, Lemma~\ref{lem:non-normal} below implies that there exists 
	an increasing sequence in $[0,x]$ that is not norm bounded. 
	By substracting this sequence from $x$ and then choosing an appropriate subsequence 
	we can find a decreasing sequence $(y_n)$ in $[0,x]$ such that $\norm{y_n} \ge n^2$ for each integer $n \ge 1$. 
	So the sequence $\big(\frac{1}{n} y_n\big)$, which is also decreasing, is not norm bounded; 
	in addition it has infimum $0$ since $0 \le \frac{1}{n}y_n \le \frac{1}{n} x$ for each $n$. 
	Thus, $\big(y_1 - \frac{1}{n} y_n\big)$ is an increasing sequence in $X_+$ which has a supremum but is not norm bounded 
	and is thus, in particular, not norm convergent.
\end{remark}

One way to prove that Open Problem~\ref{op:oc-norm} has a positive answer would be if one could show that, in an ordered Banach space, 
an increasing net with a minimal upper bound automatically has a smallest upper bound (i.e.\ a supremum). 
However, this is not true, as the following example shows. 

\begin{example}
	\label{exa:disk-algebra}
	Let $A(\bbD)$ denote the \emph{disk algebra}, i.e.\ the space of continuous complex valued functions 
	on the closed complex unit disk $\overline{\bbD}$ that are holomorphic on the open unit disk $\bbD$. 
	This is a complex Banach space with respect to the supremum norm. 
	
	Now let $X \subseteq A(\bbD)$ denote the space of those function in $A(\bbD)$ that are real-valued on $[-1,1]$ 
	and let the cone $X_+$ be the set of those functions that map $[-1,1]$ into $[0,\infty)$. 
	It follows from the uniqueness theorem for holomorphic functions that a function $f$ that satisfies $\pm f \in X_+$ is $0$, 
	so $X_+$ is pointed, i.e.\ a cone. 
	The cone $X_+$ is generating since it contains the constant function $\one$, 
	but one can easily check that $X_+$ is not normal 
	-- for instance by considering the sequence $\big(\one + \sin(n \argument)\big)$ which is order bounded but not norm bounded in $X$. 
	 
	It follows from a monotone version of the Weierstraß approximation theorem 
	-- see Lemma~\ref{lem:weierstrass-monotone} below -- 
	that there exists an a increasing sequence of polynomials $(p_n)$ in $X_+$ such that 
	$p_n(x) \to 2 - \modulus{x}$ for all $x \in [-1,1]$ 
	(and the convergence is even uniform on $[-1,1]$ 
	-- but note that the sequence $(p_n)$ is not norm bounded in $X$ since otherwise 
	the limit would be holomorphic by Vitali's theorem). 
	
	It follows from the uniqueness theorem for holomorphic functions that both functions $f,g \in X_+$ 
	given by 
	\begin{align*}
		f(z) = 2+z 
		\qquad \text{and} \qquad 
		g(z) = 2-z
	\end{align*}
	for all $z \in \overline{\bbD}$ are mininal upper bounds of the sequence $(p_n)$ in $X_+$. 
	Since there are two different minimal upper bounds, the sequence does not have a smallest upper bound.
\end{example}

Variations of the space $X$ in Example~\ref{exa:disk-algebra} are very useful to construct counterexamples 
in the theory of ordered Banach spaces and positive operators 
-- see for instance the classical example  
by Bonsall for a positive operator whose spectral radius is not in the spectrum \cite[Example~(iv) on pp.\,57--58]{Bonsall1958}
and a recent semigroup adaptation thereof in \cite[Examples~2.3]{GlueckMironchenko2024}.
In the preceding example we needed the following monotone version of the Weierstraß approximation theorem.

\begin{lemma}
	\label{lem:weierstrass-monotone}
	Let $g: [-1,1] \to \bbR$ be continuous. Then there exists a sequence of polynomial functions $(p_n)$ on $\bbR$ 
	that is increasing on $[-1,1]$ with respect to the pointwise order and that converges uniformly to $g$ on $[-1,1]$.
\end{lemma}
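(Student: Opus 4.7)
My plan is to start from any sequence of polynomials $(q_n)$ delivered by the classical Weierstraß approximation theorem on $[-1,1]$ and then to subtract small shifts $2\varepsilon_n$ in order to force the resulting sequence to be pointwise increasing on $[-1,1]$ without destroying uniform convergence. The key observation is that, if the uniform errors $\varepsilon_n$ decrease geometrically fast enough, the shift $2\varepsilon_n$ subtracted at step $n$ outweighs the worst possible pointwise dip of $q_{n+1}$ below $q_n$ that is compatible with both polynomials being within $\varepsilon_n$, respectively $\varepsilon_{n+1}$, of $g$.

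Concretely, for each $n \in \bbN$ I would choose a polynomial $q_n$ with $\norm{q_n - g}_\infty \le \varepsilon_n$ on $[-1,1]$, where $\varepsilon_n := 4^{-n}$; in fact any positive null sequence satisfying $\varepsilon_{n+1} \le \varepsilon_n / 3$ would do. Set $p_n := q_n - 2\varepsilon_n$. Uniform convergence $p_n \to g$ on $[-1,1]$ is then immediate from the triangle inequality, since $\modulus{p_n(x) - g(x)} \le \varepsilon_n + 2\varepsilon_n = 3\varepsilon_n$ for all $x \in [-1,1]$, and the right-hand side tends to $0$.

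For monotonicity on $[-1,1]$, the triangle inequality yields $q_{n+1} - q_n \ge -(\varepsilon_n + \varepsilon_{n+1})$ pointwise on $[-1,1]$, so
\begin{equation*}
	p_{n+1} - p_n = (q_{n+1} - q_n) + 2(\varepsilon_n - \varepsilon_{n+1}) \ge \varepsilon_n - 3\varepsilon_{n+1} = 4^{-n} - 3 \cdot 4^{-n-1} = 4^{-n-1} > 0.
\end{equation*}
Hence $(p_n)$ is pointwise increasing on $[-1,1]$, as required. There is no serious obstacle here beyond calibrating the decay rate of $(\varepsilon_n)$ correctly; once the relation $\varepsilon_{n+1} \le \varepsilon_n / 3$ is in place, the rest of the argument is mechanical triangle-inequality bookkeeping.
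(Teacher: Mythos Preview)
Your proof is correct and follows essentially the same approach as the paper: both take Weierstraß approximations with geometrically decaying error and shift them downward by an amount comparable to the error so that each $p_n$ lies in a band strictly below $g$ whose upper edge is at or below the lower edge of the next band. The only cosmetic difference is that the paper builds the shift into the approximation step (approximating $g - \tfrac{3}{4}\varepsilon_n$ instead of $g$, with $\varepsilon_n = 2^{-n}$), obtaining the sandwich $g-\varepsilon_n \le p_n \le g-\varepsilon_{n+1}$ directly, whereas you separate the two steps and then verify monotonicity by a triangle-inequality computation; the underlying idea is identical.
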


\begin{proof}
	We first make the following observation $(*)$:
	If $\varepsilon > 0$, then there exists a polynomial function $p$ on $\bbR$ 
	such that $g-\varepsilon \le p \le g - \frac{\varepsilon}{2}$ on $[-1,1]$. 
	This follows by applying the Weierstraß approximation theorem for real-valued continuous functions on $[-1,1]$ 
	to the function $g-\frac{3}{4}\varepsilon$.
	
	Now one can apply $(*)$ to each of the numbers $\varepsilon_n := \frac{1}{2^n}$ to get a sequence of polynomials $(p_n)$ 
	that satisfy
	\begin{align*}
		p_n \le g - \frac{\varepsilon_n}{2} = g - \varepsilon_{n+1} \le p_{n+1}
	\end{align*}
	on $[-1,1]$ for each $n \ge 0$.
\end{proof}

Let us note once again that the cone $X_+$ in Example~\ref{exa:disk-algebra} is not normal. 
We do not know an example of an increasing net $(x_j)$ in an ordered Banach space with normal cone 
such that $(x_j)$ has a minimal upper bound but not a smallest upper bound. 
If such an example does not exist, then it follows from Remark~\ref{rem:oc-norm-normal} 
that Open Problem~\ref{op:oc-norm} has a positive answer.

Generally speaking, spaces $X$ where one can find nets 
that have a minimal upper bound but not a smallest upper bound, are somewhat special.
In each space from the following list such a net does not exist.

\begin{examples}
	Let $X$ be an ordered Banach space (or, more generally than that, an Archimedean ordered vector space). 
	Any of the following conditions implies that every increasing net in $X$ with a minimal upper bound has a supremum:
	\begin{enumerate}[label=(\alph*)]
		\item 
		The space $X$ is a vector lattice.

		\item 
		Every increasing net that is order bounded from above has a supremum. 
		
		This is, for instance, true in the dual space $X'$ of any ordered Banach space $X$ with generating cone 
		(and more generally in the dual space of any pre-ordered Banach space with generating wedge).
		Moreover, the condition holds in all spaces that satisfy the equivalent assumptions of Theorem~\ref{thm:oc-norm} 
		-- and thus, in particular, in all of the spaces listed in Examples~\ref{exas:almost-oc-norm}. 
				
		\item 
		The cone $X_+$ is generating and the space $X$ is \emph{pervasive}, 
		i.e.\ for every function $f \in X$ that does not satisfy $f \le 0$, 
		there exists a vector $0 \not= u \in X_+$ 
		such that every upper bound of $\{0,f\}$ is also an upper bound of $u$.
		
		Indeed, consider such a space $X$. 
		It suffices to prove that for every decreasing net $(x_j)$ with maximal lower bound $0$, 
		the vector $0$ is even the largest lower bound. 
		So take such a net and let $f \in X$ be a lower bound of $(x_j)$. 
		If $f \not\le 0$, then there exists a vector $u$ as in the definition of pervasiveness. 
		Since every $x_j$ dominates both $f$ and $0$, it also dominates $u$. 
		Since $u \gneq 0$, this is a contradiction to $0$ being a maximal lower bound of $(x_j)$.
		Hence, $f \le 0$, so $0$ is indeed the largest lower bound.
	\end{enumerate}
\end{examples}

The notion of \emph{pervasiveness} for Archimedean spaces with generating cone 
-- or, more generally, for so-called \emph{pre-Riesz spaces} -- 
was introduced in \cite{vanGaansKalauch2008} and is discussed in detail in \cite{KalauchMalinowski2019b} 
and in various places throughout the book \cite{KalauchVanGaans2019}.

There is also the notion of a \emph{weakly pervasive} space $X$, 
which means that two vectors $f,g \in X_+$ that satisfy $[0,f] \cap [0,g] = \{0\}$ 
always have an infimum and the infimum equals $0$; 
see \cite[Definition~8 and Lemma~9]{KalauchMalinowski2019b}. 
This property is, as suggested by its name, implied by pervasiveness. 
However, in contrast to pervasiveness, weak pervasiveness does not imply that every increasing net in $X$ 
with a minimal upper bound has a supremum. 
Indeed, the space $X$ in Example~\ref{exa:disk-algebra} is weakly pervasive for the trivial reason 
that there are no non-zero vector $f,g \in X_+$ which satisfy $[0,f] \cap [0,g] = \{0\}$.  
Indeed, for two non-zero vectors $f,g \in X_+$, say of norm $\le 1$, the product $fg$ is contained in $[0,f] \cap [0,g]$ 
and is non-zero by the uniqueness theorem for holomorphic functions.

\section{A theorem on separable spaces}
\label{sec:separable}

If a countably order omplete Banach lattice $X$ is separable, 
then $X$ has order continuous norm. 
This is a classical result in Banach lattice theory (see for instance \cite[the Corollary to Theorem~5.14 on pp.\,94-95]{Schaefer1974}). 
Its proof is based on constructing a certain disjoint sequence in $X$ and thereby showing that, 
if $X$ does not have order continuous norm, then it contains a copy of $\ell^\infty$ as a sublattice, 
see e.g.\ \cite[Lemma~5.13 and Theorem~5.14 on pp.\,92--94]{Schaefer1974}). 

We will now show that the same result remains true in the setting of ordered Banach spaces. 
In this general setting, though, there is no chance that a proof based on disjoint sequences will work. 
While there is a sensible concept of disjointness available in ordered Banach spaces with generating cone and, 
more generally, in so-called \emph{pre-Riesz spaces} -- see \cite{vanGaansKalauch2006} and \cite[Section~4.1]{KalauchVanGaans2019} --, 
disjointness of two vectors tends to be a very strong property in this setting. 
For instance, there exist plenty of spaces which are so-called \emph{anti-lattices}, 
meaning that two non-zero vectors in the cone are never disjoint \cite[Section~4.1.4]{KalauchVanGaans2019}.
Hence, a different argument is needed to prove the following theorem.

\begin{theorem}
	\label{thm:separable}
	Let $X$ be an ordered Banach space with normal cone 
	and assume that every order bounded increasing sequence in $X$ has a supremum. 
	If $X$ is separable, then every order bounded increasing sequence in $X$ is norm convergent 
	(i.e., the equivalent assertions of Theorem~\ref{thm:oc-norm} are satisfied).
\end{theorem}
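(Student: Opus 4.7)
I would attempt to prove this by contradiction: from an order bounded increasing sequence in $X$ that has a supremum but does not converge in norm, I would construct an uncountable subset of $X$ whose elements are pairwise separated by a fixed positive distance, contradicting separability of $X$.

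Suppose $(x_n)$ is increasing with $x_n \le u$ for all $n$, with supremum $x^* \in X$, but $(x_n)$ is not norm convergent. Since $X$ is complete, $(x_n)$ is not Cauchy, and after passing to a subsequence (which is still increasing and has the same supremum) I may assume that $\norm{x_{n+1} - x_n} \ge \varepsilon$ for some $\varepsilon > 0$. Writing $e_n := x_{n+1}-x_n \in X_+$, the telescoping identity $\sum_{n=1}^N e_n = x_{N+1} - x_1 \le x^* - x_1$ shows that for \emph{every} subset $A \subseteq \bbN$ the partial sums $S_N^A := \sum_{n \in A,\, n \le N} e_n$ form an increasing sequence that is order bounded from above. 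By hypothesis, $\sigma(A) := \sup_N S_N^A$ exists in $X$.

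The main step is then a monotonicity estimate: for $B \subsetneq A$ and any $n_0 \in A \setminus B$ one has, for every $N \ge n_0$, the pointwise inequality $S_N^A \ge S_N^B + e_{n_0}$; taking suprema in $N$ yields $\sigma(A) \ge \sigma(B) + e_{n_0}$. Combined with normality of $X_+$ (with constant $C$), this gives
\begin{align*}
  \norm{\sigma(A) - \sigma(B)} \ge \frac{\norm{e_{n_0}}}{C} \ge \frac{\varepsilon}{C}.
\end{align*}
To obtain uncountably many such sets forming a strict chain, I fix an enumeration $(q_n)$ of $\bbQ \cap [0,1]$ and set $A_r := \{n \in \bbN : q_n \le r\}$ for $r \in [0,1]$. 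Density of $\bbQ$ ensures that $A_r \subsetneq A_s$ whenever $r < s$, so $\{\sigma(A_r) : r \in [0,1]\}$ is an uncountable subset of $X$ in which any two distinct elements differ in norm by at least $\varepsilon/C$, contradicting separability.

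I expect the main conceptual obstacle to be finding a substitute for the disjoint-sequence argument of the classical Banach lattice proof, where one extracts a disjoint order bounded positive sequence and embeds $\ell^\infty$. The route above replaces the lattice operations by two ingredients that are available in the general setting: first, the telescoping structure $e_n = x_{n+1}-x_n$ ensures that arbitrary subset sums stay order bounded and hence admit suprema by the hypothesis; second, normality of the cone converts the positivity of the ``missing increment'' $e_{n_0}$ into a uniform norm gap, without any need for lattice operations. Replacing a disjoint sequence with an uncountable chain $(A_r)_{r \in [0,1]}$ of subsets of $\bbN$ then plays the role that the embedding of $\ell^\infty$ plays in the Banach lattice argument.
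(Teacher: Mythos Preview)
Your proof is correct, and it takes a genuinely different---and more elementary---route than the paper's.

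Both arguments start the same way: extract increments $e_n \in X_+$ with $\norm{e_n} \ge \varepsilon$ whose finite sums are order bounded, and use the hypothesis to define $\sigma(A) := \sup_N \sum_{n \in A,\, n \le N} e_n$ for arbitrary $A \subseteq \bbN$. The divergence comes in how one extracts an uncountable separated family from the map $A \mapsto \sigma(A)$. The paper works with \emph{disjoint} pairs $A,B$: it first proves additivity $\sigma(A)+\sigma(B)=\sigma(A\cup B)$, then runs a nontrivial recursive refinement to find an infinite $M \subseteq \bbN$ on which $\norm{\sigma(A)-\sigma(B)} \ge \tfrac{1}{2}$ whenever $A,B \subseteq M$ are disjoint and one is infinite, and finally uses equivalence classes of $2^M$ modulo finite symmetric difference to produce uncountably many pairwise separated points. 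Your argument works instead with \emph{nested} pairs $B \subsetneq A$: the inequality $\sigma(A) \ge \sigma(B) + e_{n_0}$ for any $n_0 \in A \setminus B$ is immediate from monotonicity of suprema, and normality converts it directly into a uniform norm gap; the uncountable chain $(A_r)_{r \in [0,1]}$ of Dedekind cuts then finishes the job. Your approach buys a considerably shorter proof by sidestepping the recursive step entirely; the paper's approach, by controlling $\sigma$ on disjoint pairs rather than just chains, is closer in spirit to the classical Banach lattice argument that embeds $\ell^\infty$ and in principle yields more information about the map $A \mapsto \sigma(A)$, though that extra information is not needed for the theorem.
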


\begin{proof}
	Due to the normality of the cone we may, and shall, assume that the norm is monotone on $X_+$, 
	meaning that $0 \le x \le y$ implies $\norm{x} \le \norm{y}$ for all $x,y \in X$; 
	see \cite[Theorem~2.38(1) and~(2)]{AliprantisTourky2007}.
	Suppose that there exists an increasing sequence $(x_n)$ in $X_+$ that is not norm convergent 
	but is order bounded by an element $v \in X_+$.
	We have to show that $X$ is not separable.
	\smallskip 
	
	\emph{Step~1:} 
	We claim: 
	there exist a sequence $(z_k)$ in $X_+$ and a vector $v \in X_+$ such that $\norm{z_k} \ge 1$ for each $k$ 
	and $\sum_{k = 1}^n z_k \le v$ for each $n \in \bbN$.
		
	To see this, first set $y_1 \coloneqq x_1 \in X_+$ and $y_k \coloneqq x_k - x_{k-1} \in X_+$ for each $k \ge 2$. 
	Then $\sum_{k=1}^n y_k = x_n \le v$ for each $n \in \bbN$. 
	Since the sequence $(\sum_{k=1}^n y_k) = (x_n)$ is not Cauchy, 
	we can find a number $\delta > 0$ and a sequence of non-empty, finite, pairwise disjoint sets $F_k \subseteq \bbN$ 
	such that $z_k \coloneqq \sum_{j \in F_k} y_j$ has norm $\ge \delta$ for each $k \in \bbN$.
	Clearly, $\sum_{k=1}^n z_k \le v$ for all $n \in \bbN$. 
	By replacing each $z_k$ with $\frac{z_k}{\delta}$ and $v$ with $\frac{v}{\delta}$ 
	we obtain the desired properties.
	\smallskip
	
	\emph{Step~2:} We construct infinite series by using suprema: 
	
	For each $A \subseteq \bbN$ we define the expression $\sum_{k \in A} z_k$ 
	as the supremum of the increasing sequence $\big(\sum_{A \ni k \le n} z_k\big)_{n \in \bbN}$ 
	which exists by the assumption of the theorem as the sequence is order bounded by $v$.
	Note that $\sum_{k \in A} z_k$ is the supremum of $\big\{ \sum_{k \in F} z_k \suchthat F \subseteq A \text{ finite} \big\}$.
	From this one can easily derive that 
	$$\sum_{k \in A} z_k + \sum_{k \in B} z_k = \sum_{k \in A \cup B} z_k$$
	for all disjoint $A, B \subseteq \bbN$.
	\smallskip
	
	\emph{Step~3:} 
	We claim that 
	there exists an infinite set $M \subseteq \bbN$ with the following property:
	whenever $A,B \subseteq M$ are disjoint and at least one of them is infinite, 
	then $\norm{\sum_{k \in A} z_k - \sum_{k \in B} z_k} \ge \frac{1}{2}$. 
	
	Assume the contrary.
	Then we can recursively construct two sequences of sets $(A_n)$ and $(B_n)$ with the following properties:
	each $A_n$ is infinite and for each $n \in \bbN$ one has $A_{n+1}, B_{n+1} \subseteq A_n$, 
	as well as $A_n \cap B_n = \emptyset$ and 
	\begin{align*}
		\Big\lVert
			\underbrace{\sum_{k \in A_n} z_k}_{\eqqcolon a_n}
			- 
			\underbrace{\sum_{k \in B_n} z_k}_{\eqqcolon b_n}
		\Big\rVert 
		< 
		\frac{1}{2}
	\end{align*} 
	Each $a_n$ dominates one of the vectors $z_k$ and thus satisfies $\norm{a_n} \ge 1$ as the norm on $X$ is monotone. 
	Hence,
	\begin{align*}
		\norm{a_n + b_n} 
		\ge 
		2\norm{a_n} 
		- 
		\norm{a_n - b_n}
		\ge 
		2 \norm{a_n} 
		- 
		\frac{1}{2}
		\ge 
		\frac{3}{2} \norm{a_n}
	\end{align*}
	for each $n \in \bbN$. 
	For all $n$ the sets $A_{n+1}$ and $B_{n+1}$ are disjoint and contained in $A_n$, so one has $a_n \ge a_{n+1} + b_{n+1}$. 
	Therefore,
	\begin{align*}
		\norm{a_n} 
		\ge 
		\norm{a_{n+1} + b_{n+1}} 
		\ge 
		\frac{3}{2}
		\norm{a_{n+1}}
	\end{align*}
	and hence $\norm{a_1} \ge \big(\frac{3}{2}\big)^n$ for each $n \in \bbN$, which is a contradiction.
	\smallskip 
	
	\emph{Step~4:} We show that $X$ is not separable.
	
	To this end we take the set $M \subseteq \bbN$ whose existence we proved in Step~3 and 
	introduce the following equivalence relation $\sim$ on its power set $2^M$: 
	for all $C,D \in 2^M$, define $C \sim D$ if and only if the symmetric difference $C \mathbin{\Delta} D$ is finite. 
	Each of the equivalence classes is countable. 
	Since $2^M$ is uncountable, there exist uncountably many equivalence classes. 
	
	Select one representative of each equivalence class and collect them in a set $P \subseteq 2^M$.
	Consider $C,D \in P$ such that $C \not= D$. 
	Then $C \mathbin{\Delta} D$ is infinite 
	and hence, at least one of the differences $C \setminus D$ and $D \setminus C$ is infinite.
	We conclude that 
	\begin{align*}
		\norm{\sum_{k \in C} z_k - \sum_{k \in D} z_k} 
		= 
		\norm{\sum_{k \in C \setminus D} z_k - \sum_{k \in D \setminus C} z_k}
		> 
		\frac{1}{2}
		,
	\end{align*}
	where the equality follows from the observation at the end of Step~2 
	and the inequality follows from the properties of $M$ given in Step~3. 
	So $\Big( \sum_{k \in C} z_k \Big)_{C \in P}$ is an uncountable family of elements of $X$ 
	that all have mutual distance $> \frac{1}{2}$.
	Hence, $X$ is not separable.
\end{proof}

Recall from Theorem~\ref{thm:oc-norm} that the conclusion of Theorem~\ref{thm:separable} implies normality of the cone. 
Hence, it is natural to ask whether the assumption in Theorem~\ref{thm:separable} that the cone be normal is in fact redundant.
We pose this and a more general version of this question as an open problem:

\begin{open_problem}
	\label{op:sup-implies-normal}
	\leavevmode
	\begin{enumerate}
		\item 
		If every increasing order bounded sequence in an ordered Banach space $X$ has a supremum, 
		does it follow that $X$ is normal?
		
		\item 
		Is the assumption that $X_+$ be normal redundant in Theorem~\ref{thm:separable}?
	\end{enumerate}
\end{open_problem}

Clearly, if part~(1) of Open Problem~\ref{op:sup-implies-normal} has a positive answer, then so has part~(2).
In an attempt to prove~(1) a natural approach is trying to adapt the argument that the Levi property 
implies normality of the cone, which can for instance be found in~\cite[Theorem~2.45]{AliprantisTourky2007}. 
However, the author has not been able to find a variant of this argument that works to prove~(1).

We give an application of Theorem~\ref{thm:separable} in Corollary~\ref{cor:compact-operators-oc-norm-non-reflexive} 
in the next section. 
In the present section, we merely discuss a simple toy example 
to demonstrate how Theorem~\ref{thm:separable} can be used (Example~\ref{exa:concave} below). 
To decrease the technical overhead in the example the following corollary is useful.

\begin{corollary}
	\label{cor:tau}
	Let $X$ be an ordered Banach space with normal cone and assume that $X$ is separable. 
	Let $\tau$ be a vector space topology on $X$ with the following two properties:
	\begin{enumerate}[label=\upshape(\arabic*)]
		\item 
		The cone $X_+$ is closed with respect to $\tau$. 
		
		\item 
		Every increasing norm bounded sequence in $X_+$ converges to a vector in $X$ with respect to $\tau$. 
	\end{enumerate}
	Then every increasing norm bounded sequence in $X$ is norm convergent.
\end{corollary}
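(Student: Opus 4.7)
The plan is to reduce to Theorem~\ref{thm:separable} by first verifying its main hypothesis, namely that every order bounded increasing sequence in $X$ has a supremum. Once this is done, Theorem~\ref{thm:separable} provides the Levi property, and a small additional argument then promotes the given norm boundedness to order boundedness so that the Levi property delivers the conclusion of the corollary.

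The key observation I would first establish is: \emph{if $(x_n)$ is an increasing norm bounded sequence in $X_+$ and $x \in X$ is a $\tau$-limit of $(x_n)$ as furnished by~(2), then $x = \sup_n x_n$.} To see that $x$ is an upper bound, fix $n$ and observe that the sequence $(x_m - x_n)_{m \ge n}$ lies in $X_+$ and $\tau$-converges to $x - x_n$ (since $\tau$ is a vector space topology); by~(1) we get $x - x_n \in X_+$. To see that $x$ is the smallest upper bound, let $z$ be any other upper bound. Then $z - x_n \in X_+$ for each $n$ and $z - x_n \to z - x$ in $\tau$, so~(1) again yields $z - x \in X_+$.

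From here I would deduce the corollary as follows. Any order bounded increasing sequence in $X$ is automatically norm bounded because $X_+$ is normal; after subtracting its first term it sits in $X_+$ and is still increasing and norm bounded, so the key observation produces a supremum. Thus every order bounded increasing sequence in $X$ has a supremum, Theorem~\ref{thm:separable} applies, and $X$ has the Levi property. Finally, given an arbitrary increasing norm bounded sequence $(x_n)$ in $X$, the same reduction to $X_+$ and the same observation provide an upper bound, so $(x_n)$ is in fact order bounded from above, and the Levi property yields norm convergence.

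I do not anticipate a genuine obstacle: the argument is essentially a bookkeeping exercise playing $\tau$-closedness of $X_+$ against the $\tau$-convergence guaranteed by~(2). The only mild subtlety worth flagging is that hypothesis~(2) is used twice -- once to supply the suprema needed to invoke Theorem~\ref{thm:separable}, and once more to upgrade the assumed norm boundedness of the sequence of interest to order boundedness.
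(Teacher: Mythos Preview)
Your proposal is correct and follows essentially the same approach as the paper: you isolate the observation that a $\tau$-limit of an increasing norm bounded sequence in $X_+$ is its supremum, then apply it twice---once (combined with normality) to feed Theorem~\ref{thm:separable}, and once more to pass from norm boundedness to order boundedness. Your write-up is in fact slightly more explicit than the paper's in justifying why the $\tau$-limit is the \emph{least} upper bound.
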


\begin{proof}
	First make the following observation $(*)$: 
	If $(x_n)$ is an increasing norm bounded sequence in $X_+$, 
	then it $\tau$-converges to a vector $x \in X$ according to~(2), 
	and it follows from~(1) that $x$ is the supremum of $(x_n)$. 
	
	Now one can apply~$(*)$ twice: 
	As $X_+$ is normal, every order bounded sequence in $X$ is norm bounded; 
	thus, $(*)$ shows that the assumptions of Theorem~\ref{thm:separable} are satisfied. 
	The theorem yields that every increasing order bounded sequence in $X_+$ is norm convergent. 
	On the other hand, $(*)$ shows that every increasing norm bounded sequence is order bounded, 
	so the claim is proved.
\end{proof}

\begin{example}
	\label{exa:concave}
	For each integer $n \ge 0$ let $f_n: [0,1] \to [0,\infty)$ be a continuous and concave function 
	and assume that there exists a number $M \ge 0$ such that $\sum_{n=0}^\infty f_n(s) \le M$ for each $s \in [0,1]$. 
	Then the series $\sum_{n=0}^\infty f_n$ converges in sup norm.
\end{example}

\begin{proof}
	Let $X = C([0,1])$ denote the space of all continuous real-valued functions on $[0,1]$, 
	endowed with the sup norm.
	This is a separable Banach space. 
	We now endow $X$ with the closed cone 
	\begin{align*}
		X_+ := \{f \in C([0,1]) \suchthat f \text{ is concave and } f(s) \ge 0 \text{ for all } s \in [0,1] \},
	\end{align*}
	which turns $X$ into an ordered Banach space. 
	In order to apply Corollary~\ref{cor:tau}, 
	let $\tau$ denote the topology on $X$ of pointwise convergence on the open set $(0,1)$ 
	-- i.e., let $\tau$ be the initial topology of the family $(\delta_x)_{x \in (0,1)}$ of point evaluations.
	
	Assumption~(1) of the corollary is satisfied since $X$ only contains continuous functions.
	To see that assumption~(2) is satisfied, let $(g_n)$ be an increasing and norm bounded sequence in $X_+$. 
	Then $(g_n)$ is, in particular, increasing with respect to the pointwise order. 
	So the sequence converges pointwise on $[0,1]$ to a concave and bounded function $\tilde g: [0,1] \to \bbR$; 
	but $\tilde g$ need not be continuous on $[0,1]$ and thus need not be in $X$. 
	However, due to the concavity $\tilde g$ is continuous in the interior $(0,1)$ and its restriction $\tilde g|_{(0,1)}$ 
	can be extended to a continuous function $g: [0,1] \to \bbR$; so $g \in X$.
	Observe that $(g_n)$ is $\tau$-convergent to $g$.
	So Corollary~\ref{cor:tau} is applicable and gives the claim. 
\end{proof}

Obviously, a similar arguments works for convex instead of concave functions, too.

\section{Dini's theorem revisited}
\label{sec:dini}

A classical form of Dini's theorem says the following: 
if $K$ is a compact Hausdorff space, $(f_n)$ is a pointwise increasing sequence (or net) 
of continuous real-valued functions on $K$ and the pointwise limit $f$ of $(f_n)$ is real-valued 
and continuous, then the convergence is automatically uniform. 
There is a version of the theorem for ordered Banach spaces which reads as follows:
if $X$ is an ordered Banach space with normal cone 
and $(x_j)$ is an increasing and weakly convergent net in $X$, 
then $(x_j)$ is even norm convergent. 
This result can, for instance, be found in \cite{Weston1957} (for sequences), in \cite[Theorem~IV.3.1]{Wulich2017}, 
or, in the language of filters, in \cite[paragraph~4.3 on p.\,223 in Section~V.4]{SchaeferWolff1999}. 
Since increasing sequences and nets are the main topic of the present paper, 
we consider it worthwhile to point out that the following slightly more general version of the theorem holds 
(although the proof is almost the same) 
and to list a few examples. 
As an application we will give a sufficient criterion in Corollary~\ref{cor:compact-operators-oc-norm} 
for the space of all compact operators between two Banach lattices 
to satisfy the equivalent conditions of Theorem~\ref{thm:oc-norm}.

\begin{theorem}
	\label{thm:dini}
	Let $X$ be an ordered Banach space and  
	let $C' \subseteq X'_+$ be a set of positive functionals with the following properties: 
	\begin{enumerate}[label=\upshape(\arabic*)]
		\item 
		$C'$ is weak*-compact. 
		
		\item 
		$C'$ \emph{determines positivity} in the following sense: 
		if $x \in X$ and $\langle c', x \rangle \ge 0$ for all $x' \in C'$, then $x \in X_+$.
		
		\item 
		$C'$ \emph{is almost norming on $X_+$} in the following sense: 
		there is a number $\delta > 0$ such that 
		$\sup_{c' \in C'} \langle c', x \rangle \ge \delta \norm{x}$ for every $x \in X_+$.
	\end{enumerate}
	If $(x_j)_{j \in J}$ is an increasing net in $X$ and $x_0 \in X$ 
	such that $\langle c', x_j \rangle \to \langle c', x_0 \rangle$ for each $c' \in C'$, 
	then $x_j \to x_0$ in norm.
\end{theorem}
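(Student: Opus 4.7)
The plan is to reduce to the standard Dini argument on the compact space $C'$, with the three hypotheses used at three distinct points: \textbf{(2)} to ensure $x_0$ is an upper bound, \textbf{(1)} together with monotonicity to invoke classical Dini, and \textbf{(3)} to convert uniform convergence on $C'$ into norm convergence.

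First I would check that $x_0 \ge x_j$ for every $j \in J$. Fix $j$, take any $c' \in C'$ and any $k \ge j$; since $x_k - x_j \in X_+$ and $c' \ge 0$, we have $\langle c', x_k - x_j \rangle \ge 0$. Letting $k$ run over the cofinal set $\{k : k \ge j\}$ and using the assumed pointwise convergence on $C'$ gives $\langle c', x_0 - x_j \rangle \ge 0$. This holds for every $c' \in C'$, so by property \textbf{(2)} we get $x_0 - x_j \in X_+$.

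Next I would set $y_j := x_0 - x_j \in X_+$ and observe that $(y_j)_{j \in J}$ is a decreasing net (since $(x_j)$ is increasing) and that $\langle c', y_j \rangle \to 0$ for every $c' \in C'$. The functions $\hat{y}_j : C' \to [0,\infty)$ defined by $\hat{y}_j(c') := \langle c', y_j \rangle$ are weak*-continuous on $C'$ (this is the very definition of the weak* topology), pointwise decreasing in $j$ since $y_j - y_k \in X_+$ for $k \ge j$, and converge pointwise to the continuous (zero) function. Because $C'$ is weak*-compact by \textbf{(1)}, the classical Dini theorem for monotone nets of continuous functions on a compact Hausdorff space applies and yields
\begin{align*}
	\sup_{c' \in C'} \langle c', y_j \rangle \longrightarrow 0.
\end{align*}

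Finally, property \textbf{(3)} gives a constant $\delta > 0$ with $\delta \norm{y_j} \le \sup_{c' \in C'} \langle c', y_j \rangle$ for every $j$ (since each $y_j \in X_+$), so $\norm{x_0 - x_j} = \norm{y_j} \to 0$, which is the desired norm convergence. The only point requiring any care is the first step (that $x_0$ dominates the net); once this is in place the argument is routine. The substantive work in the theorem lies in its formulation, not in the proof: replacing the usual hypothesis of a normal cone with the three properties of an arbitrary ``almost norming, positivity-determining, weak*-compact'' set $C' \subseteq X'_+$ gives the flexibility needed for the applications (in particular Corollary~\ref{cor:compact-operators-oc-norm}).
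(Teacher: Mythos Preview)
Your proof is correct and follows essentially the same approach as the paper: both use property~(2) to show $x_0 \ge x_j$, then use weak*-compactness together with monotonicity to obtain uniform convergence of $\langle c', x_0 - x_j\rangle$ to $0$ over $c' \in C'$, and finally invoke property~(3). The only difference is packaging: the paper writes out the open-cover argument explicitly, while you invoke classical Dini on $C(C')$ as a black box---an alternative the paper itself mentions in the remark following the theorem.
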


\begin{proof}
	Let $(x_j)_{j \in J}$ and $x_0$ be as in the theorem. 
	Then $\langle c', x_j \rangle \le \langle c', x_0 \rangle$ for all $c' \in C'$ 
	and it thus follows from assumption~(2) that $x_j \le x_0$ for all $j \in J$.
	
	To show the norm convergence, fix a number $\varepsilon > 0$.
	For each $j \in J$ the set $C'_j := \{c' \in C' \suchthat \langle c', x_0-x_j \rangle < \varepsilon \}$
	is weak* open within $C'$ and it follows from the assumption that $\bigcup_{j \in J} C'_j = C'$. 
	As $C'$ is weak* compact according to~(1), we can thus find finitely many indices $j_1, \dots, j_n \in J$ 
	such that $C'_{j_1} \cup \dots \cup C'_{j_n} = C'$. 
	
	Choose an index $j_0 \in J$ that dominates all the indices $j_1, \dots, j_n$. 
	If $j \ge j_0$ and $c' \in C'$ we have $c' \in C'_{j_k}$ for some $k \in \{1, \dots, n\}$; 
	since $j \ge j_k$ and since the net under consideration is increasing, 
	one has $x_j \ge x_{j_k}$ and thus  
	\begin{align*}
		\langle c', x_0-x_j \rangle 
		\le 
		\langle c', x_0-x_{j_k} \rangle 
		< 
		\varepsilon
		.
	\end{align*}
	As we know from the beginning of the proof that $x_0-x_j \in X_+$, 
	we conclude from assumption~(3) that $\norm{x_0-x_j} \le \frac{\varepsilon}{\delta}$ for all $j \ge j_0$.
\end{proof}

For the sake of completeness let us state the following well-known special case 
of Theorem~\ref{thm:dini} explicitly.

\begin{corollary}
	\label{cor:dini-weak-conv} 
	Let $X$ be an ordered Banach space with normal cone 
	and let $(x_j)_{j \in J}$ be an increasing net in $X$ 
	that converges weakly to a point $x_0 \in X$. 
	Then one even has $x_j \to x_0$ in norm.
\end{corollary}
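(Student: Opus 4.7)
The plan is to apply Theorem~\ref{thm:dini} with the choice $C' \coloneqq X'_+ \cap B_{X'}$, where $B_{X'}$ denotes the closed unit ball of $X'$. Since weak convergence of $(x_j)_{j \in J}$ to $x_0$ immediately gives $\langle c', x_j \rangle \to \langle c', x_0 \rangle$ for every $c' \in X'$, the hypothesis on the net in Theorem~\ref{thm:dini} is automatic, and the whole task reduces to verifying the three conditions on $C'$.

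Conditions~(1) and~(2) are routine. For~(1) I would note that $X'_+$ is weak*-closed while $B_{X'}$ is weak*-compact by Banach--Alaoglu, so their intersection is weak*-compact. For~(2) I would invoke the Hahn--Banach separation statement recorded in the introduction (a vector $x \in X$ lies in $X_+$ iff $\langle x', x \rangle \ge 0$ for all $x' \in X'_+$): if $\langle c', x \rangle \ge 0$ for every $c' \in C'$, then rescaling yields $\langle x', x \rangle \ge 0$ for every $x' \in X'_+$, forcing $x \in X_+$.

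The main step, and essentially the only place where normality of the cone enters, will be the verification of condition~(3). The introduction records that $X_+$ is normal if and only if the dual wedge $X'_+$ is generating. Combined with the quantitative decomposition result for generating wedges also quoted there, this yields a constant $M \ge 1$ such that every $x' \in X'$ admits a decomposition $x' = y' - z'$ with $y', z' \in X'_+$ and $\norm{y'}, \norm{z'} \le M \norm{x'}$. Given $x \in X_+$, I would apply Hahn--Banach to pick some $x' \in X'$ with $\norm{x'} \le 1$ and $\langle x', x \rangle = \norm{x}$, then decompose $x' = y' - z'$ as above. Since $\langle z', x \rangle \ge 0$, this forces $\langle y', x \rangle \ge \norm{x}$, so $y'/M \in C'$ witnesses $\sup_{c' \in C'} \langle c', x \rangle \ge \norm{x}/M$. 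Hence~(3) holds with $\delta \coloneqq 1/M$, and Theorem~\ref{thm:dini} delivers the conclusion.
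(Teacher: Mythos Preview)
Your proposal is correct and follows essentially the same route as the paper: choose $C'$ to be the positive part of the closed dual unit ball, verify~(1) via Banach--Alaoglu, (2) via Hahn--Banach separation, and (3) via the fact that normality of $X_+$ makes $X'_+$ generating together with the uniform decomposition property. Your treatment of~(3) spells out the norming-functional argument that the paper leaves implicit by simply citing \cite[Proposition~1.1.2]{BattyRobinson1984}, but the underlying idea is identical.
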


\begin{proof}
	This follows by choosing the set $C'$ in Theorem~\ref{thm:dini} to be the positive closed unit ball in $X'$. 
	Indeed, this set $C'$ satisfies~(1) by the Banach--Alaoglu theorem 
	and it satisfies~(2) as a consequence of the Hahn--Banach separation theorem. 
	To see~(3), note that the dual wedge $X'_+$ is generating in $X'$ 
	since $X_+$ is normal \cite[Theorem~4.5]{KrasnoselskijLifshitsSobolev1989}. 
	Hence, property~(3) follows from the uniform decomposition theorem in spaces with a generating closed wedge 
	\cite[Proposition~1.1.2]{BattyRobinson1984}.
\end{proof}

The relation of Theorem~\ref{thm:dini} to Dini's classical theorem for continuous functions 
is explained in the following example and the subsequent remark.

\begin{example}[Dini's theorem on spaces of continuous functions]
	\label{exa:dini-continuous-functions}
	\leavevmode
	\begin{enumerate}[label=(\alph*)]
		\item\label{exa:dini-continuous-functions:itm:compact}
		Let $K$ be a compact Hausdorff space and endow 
		the space $C(K)$ of real-valued continuous functions on $K$ 
		with the sup norm and with the pointwise order.
		Let $(f_j)$ be an increasing net in $C(K)$ 
		and assume that $(f_j)$ converges pointwise to a function $f \in C(K)$. 
		Then the convergence takes place in norm. 
		
		This well-known result is a special case of Theorem~\ref{thm:dini}: 
		just apply the theorem to the set $C' := \{\delta_s \suchthat s \in K\} \subseteq C(K)'$ of Dirac measures $\delta_s$ on $K$.
		
		\item\label{exa:dini-continuous-functions:itm:locally-compact}
		More generally, consider a locally compact Hausforff space $L$ 
		and endow with space $C_0(L)$ of real-valued continuous functions on $L$ that vanish at infinity 
		with the pointwise order and the sup norm.
		If $(f_j)$ is an increasing net in $C_0(L)$ 
		that converges pointwise to a function $f \in C_0(L)$, 
		then the convergence takes place in norm. 
		
		To see this, one can apply~\ref{exa:dini-continuous-functions:itm:compact} 
		to the one-point compactification of $L$. 
		Alternatively, one can apply Theorem~\ref{thm:dini} 
		to the weak* compact set $C' := \{\delta_s \suchthat s \in L\} \cup \{0\} \subseteq C_0(L)'$.
	\end{enumerate}
\end{example}

\begin{remarks}
	\begin{enumerate}
		\item 
		For sequences, Dini's classical theorem for continuous functions 
		(see Example~\ref{exa:dini-continuous-functions}\ref{exa:dini-continuous-functions:itm:compact}) 
		can also be obtained from Corollary~\ref{cor:dini-weak-conv}. 
		Indeed, if $K$ is a compact Hausdorff space and $(f_n) \subseteq C(K)$ is an increasing sequence 
		that converges pointwise to a continuous function $f \in C(K)$, 
		then it follows from the Riesz representation theorem for the dual space $C(K)'$ 
		and from the monotone convergence theorem for integrals that $(f_n)$ convergences even weakly to $f$. 
		Hence, Corollary~\ref{cor:dini-weak-conv} gives the norm convergence.
		
		Yet, this argument cannot be applied to nets since the monotone convergence theorem 
		does not hold for nets. 
		This is one reason (among others, see below) 
		why it seems more natural to formulate Theorem~\ref{thm:dini} in the more general version 
		that uses the set $C'$.
		
		\item 
		On the other hand, one can also derive Theorem~\ref{thm:dini} 
		from Dini's classical theorem for continuous functions 
		(Example~\ref{exa:dini-continuous-functions}\ref{exa:dini-continuous-functions:itm:compact}):
		In the situation of the theorem, consider the continuous linear map 
		\begin{align*}
			\phi: X & \to C(C'), \\ 
			x & \mapsto \big( c' \mapsto \langle c', x \rangle \big),
		\end{align*}
		where $C'$ is endowed with the weak* topology and is hence compact by assumption~(1).
		This maps is positive since $C' \subseteq X'_+$ and it is even bipositive 
		(i.e.\ a vector $x \in X$ is positive if and only if $\phi(x)$ is positive) 
		due to assumption~(2) in the theorem. 
		Moreover, it follows from assumption~(3) that the map is bounded below.
		The assumption on $(x_j)_{j \in J}$ implies that $\big(\phi(x_j)\big)_{j \in J}$ converges pointwise to $\phi(x_0)$, 
		so by Dini's classical theorem this convergence takes place with respect to the sup norm in $C(C')$. 
		Since $\phi$ is bounded below, it follows that $(x_j)_{j \in J}$ converges in norm to $x_0$.
	\end{enumerate}
\end{remarks}

As slightly less obvious example is the following.

\begin{example}[Monotone convergence of self-adjoint operators]
	\label{exa:dini-self-adjoint}
	Let $H$ be a complex Hilbert space. 
	\begin{enumerate}[label=(\alph*)]
		\item\label{exa:dini-self-adjoint:itm:compact}
		Endow the space $\calK(H)_{\operatorname{sa}}$ of self-adjoint compact linear operators on $H$ 
		with the cone of positive semi-definite elements. 
		Let $(A_j)$ be an increasing net in $\calK(H)_{\operatorname{sa}}$ and let $A \in \calK(H)_{\operatorname{sa}}$. 
		If $\inner{x}{A_j x} \to \inner{x}{A x}$ for each $x \in H$, 
		then $A_j \to A$ with respect to the operator norm.
		
		To see this, let $B_H$ denote the closed unit ball in $H$. 
		and first note that, for every compact linear operator $C$ on $H$, 
		the mapping $B_H \times B_H \to \bbC$, $(x,y) \mapsto \inner{x}{Cy}$ is jointly continuous 
		with respect to the weak topology on $H$.
		Indeed, this is a consequence of the observation that, if a bounded net $(x_j)$ in $H$ 
		converges weakly to a point $x \in H$, 
		then the net of functionals $\big(\inner{x_j}{C \argument}\big)$ on $H$ 
		converges in norm to the functional $\inner{x}{C \argument}$ on $H$.%
		
		Now we can show the claimed operator norm convergence of $(A_j)$. 
		For each $x \in B_H$ consider the bounded linear functional 
		$x \otimes x: \calK(H)_{\operatorname{sa}} \to \bbR$ that is given by 
		$\langle x \otimes x, B \rangle := \inner{x}{Bx}$ for all $B \in \calK(H)_{\operatorname{sa}}$. 
		We define $C' \coloneqq \{x \otimes x \suchthat x \in B_H\}$ and show that $C'$ 
		satisfies the assumptions~(1)--(3) from Theorem~\ref{thm:dini}:
		
		(1) 
		As $C'$ is norm bounded, we only need to check that it is weak* closed. 
		So let $(x_j \otimes x_j)$ be a net in $C'$ that converges weak* 
		to functional $\varphi$ on $\calK(H)_{\operatorname{sa}}$.
		After switching to a subnet we may, and shall, assume that $(x_j)$ is weakly convergent to a point $x \in B_H$. 
		For every $C \in \calK(H)_{\operatorname{sa}}$ we then obtain, according to the preceding paragraph, 
		\begin{align*}
			\langle x_j \otimes x_j, C \rangle 
			= 
			\inner{x_j}{Cx_j} 
			\to 
			\inner{x}{Cx} 
			= 
			\langle x \otimes x, C \rangle
			,
		\end{align*}
		and thus $\varphi = x \otimes x \in C'$. 
		
		(2) 
		This is obvious from the definition of positive elements in $\calK(H)_{\operatorname{sa}}$.
		
		(3)
		One has $\sup_{x \in B_H} \langle x \otimes x, C \rangle = \norm{C}$ 
		for every positive $C \in \calK(H)_{\operatorname{sa}}$ due to the self-adjointness of $C$. 
		
		Hence, Theorem~\ref{thm:dini} is applicable and thus gives the claimed norm convergence of $A_j$ to $A$.

		\item\label{exa:dini-self-adjoint:itm:compact-order-interval}
		Under the assumptions in~\ref{exa:dini-self-adjoint:itm:compact} the net $(A_j)$ is eventually contained in an order interval.
		Hence, instead of using Theorem~\ref{thm:dini}, one can also obtain the operator norm convergence of $(A_j)$ to $A$
		as a special case of the following observation: 
		
		Let $A,B,C \in \calK(H)_{\operatorname{sa}}$ and assume that $(A_j)$ is a net in the order interval $[B,C]$ 
		such that $\inner{x}{A_j x} \to \inner{x}{Ax}$ for all $x \in H$. 
		Then even $A_j \to A$ with respect to the operator norm. 
		
		This can be seen by combining the follow two observations: 
		On the one hand, it follows from the polarization identity that $A_j \to A$ with respect to the weak operator topology. 
		On the other hand, every order interval in $\calK(H)_{\operatorname{sa}}$ as compact with respect to the operator norm
		(see for instance \cite[Lemma~7.3]{GlueckGrohPreprint} for a proof).

		\item\label{exa:dini-self-adjoint:itm:non-compact} 
		The assertion of part~\ref{exa:dini-self-adjoint:itm:compact} does not remain true, 
		in general, if we consider bounded instead of compact linear operators. 
		
		For instance, let $H = \ell^2$ and let $A_n$ be the multiplication with the indicator function 
		of $\{n,n+1,n+2, \dots\}$ for each $n \in \bbN$. 
		Then $\inner{x}{A_n x} \to 0$ for each $x \in \ell^2$, but the sequence $(A_n)$ is not norm convergent to $0$.
	\end{enumerate}
\end{example}

We conclude this section with two corollaries that rely on an application of Theorem~\ref{thm:dini} 
to the operator norm convergence of a net of compact operators. 
For two ordered Banach spaces $X$ and $Y$ we endow the spaces $\calL(X;Y)$ of bounded linear operators 
and $\calK(X;Y)$ of compact linear operators with the wedge of positive operators in the given space, respectively. 
If $X_+$ is total, then those wedges are even cones and thus those two operator spaces become ordered Banach spaces
(note that the order on those spaces is completely different than the order in Example~\ref{exa:dini-self-adjoint}).

\begin{corollary}
	\label{cor:compact-operators-weak-norm}
	Let $X,Y$ be ordered Banach spaces and assume that $X_+$ is generating and $Y_+$ is normal. 
	Also assume that the space $X$ is reflexive.
	Let $T$ be an element of the space $\calK(X;Y)$ of compact linear operators from $X$ to $Y$ 
	and let $(T_j)$ be an increasing net in $\calK(X;Y)$ that converges to $T$ 
	with respect to the weak operator topology. 
	Then the convergence actually takes place in operator norm.
\end{corollary}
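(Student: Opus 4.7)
The plan is to apply Theorem~\ref{thm:dini} to the ordered Banach space $\calK(X;Y)$, choosing as the set $C'$ the collection of ``elementary tensors''
\begin{align*}
	x \otimes y' : \calK(X;Y) \to \bbR, \qquad S \mapsto \langle y', Sx \rangle,
\end{align*}
with $x$ ranging over the positive closed unit ball $B_{X_+} := X_+ \cap B_X$ and $y'$ over the positive closed unit ball $B_{Y'_+} := Y'_+ \cap B_{Y'}$. Each such $x \otimes y'$ is a positive functional on $\calK(X;Y)$, since $S X_+ \subseteq Y_+$ for positive $S$ and $y' \ge 0$.

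The main obstacle will be verifying condition~(1) of Theorem~\ref{thm:dini}, namely the weak*-compactness of $C'$, and this is precisely where the compactness of the operators in $\calK(X;Y)$ and the reflexivity of $X$ are used. I plan to exhibit $C'$ as the continuous image of the compact set $B_{X_+} \times B_{Y'_+}$, where $B_{X_+}$ is weakly compact by reflexivity of $X$ together with closedness of $X_+$, and $B_{Y'_+}$ is weak*-compact by Banach--Alaoglu together with weak*-closedness of $Y'_+$. To see that the map $(x,y') \mapsto x \otimes y'$ is continuous from the product of the weak and weak* topologies into the weak* topology on $\calK(X;Y)'$, I would fix $S \in \calK(X;Y)$ and use the decomposition
\begin{align*}
	\langle y'_\alpha, S x_\alpha\rangle - \langle y', Sx\rangle
	=
	\langle y'_\alpha, S x_\alpha - Sx\rangle
	+
	\langle y'_\alpha - y', Sx\rangle;
\end{align*}
the first summand tends to zero because a compact operator sends bounded weakly convergent nets to norm convergent nets (while $(y'_\alpha)$ stays norm bounded), and the second summand tends to zero by weak* convergence.

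Conditions~(2) and~(3) are more routine. For~(2), if $S \in \calK(X;Y)$ satisfies $\langle y', Sx \rangle \ge 0$ for all $(x,y') \in B_{X_+} \times B_{Y'_+}$, then by homogeneity the same holds for all $x \in X_+$ and $y' \in Y'_+$, and the Hahn--Banach characterization of $Y_+$ forces $Sx \in Y_+$ for every $x \in X_+$. For~(3), I would apply the uniform decomposition theorem twice: once in $X$ (since $X_+$ is generating) to replace the supremum over $B_X$ by a supremum over $B_{X_+}$ up to a constant factor, and once in $Y'$ (since $Y_+$ is normal, hence $Y'_+$ is generating) to replace the supremum over $B_{Y'}$ by a supremum over $B_{Y'_+}$ up to a constant factor; combining these yields a $\delta > 0$ such that $\sup_{c' \in C'} \langle c', S \rangle \ge \delta \norm{S}$ for every positive $S \in \calK(X;Y)$.

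With all three hypotheses of Theorem~\ref{thm:dini} verified, the assumption that $T_j \to T$ in the weak operator topology is precisely the assertion that $\langle x \otimes y', T_j\rangle \to \langle x \otimes y', T\rangle$ for all $x \in X$ and $y' \in Y'$, hence in particular for all $c' \in C'$, and the theorem delivers the desired operator norm convergence.
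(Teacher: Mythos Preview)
Your proposal is correct and follows essentially the same route as the paper's proof: the paper also applies Theorem~\ref{thm:dini} with $C' = \{x \otimes y' : x \in B_{X_+},\, y' \in B_{Y'_+}\}$, invoking reflexivity of $X$, weak*-compactness of $B_{Y'_+}$, and the fact that compact operators upgrade bounded weak convergence to norm convergence for condition~(1), and the uniform decomposition theorem (via $X_+$ generating and $Y'_+$ generating) for condition~(3). Your write-up is in fact slightly more explicit about the continuity of $(x,y') \mapsto x \otimes y'$ than the paper's version.
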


\begin{proof}
	For all $x \in X$ and $y' \in X'$ let $\varphi_{x,y'} \in \calK(X;Y)'$ denote the functional given by 
	\begin{align*}
		\left \langle \varphi_{x,y'} , S \right \rangle_{\langle \calK(X,Y)', \calK(X,Y) \rangle}
		\coloneqq 
		\langle y', Sx \rangle_{\langle Y', Y \rangle}
	\end{align*}
	for all $S \in \calK(X;Y)$.
	The set $C' \coloneqq \{\varphi_{x,y'} \suchthat x \in X_+, \, y' \in Y'_+, \, \norm{x} \le 1, \, \norm{y'} \le 1\}$ 
	is compact with respect to the weak* topology on $\calK(X;Y)'$ since the positive unit ball in $Y'$ is weak* compact, 
	the positive unit ball in $X$ is weakly compact due to the reflexivity of $X$, 
	and every compact operator maps bounded weakly convergent nets to norm convergent nets.
	Next we observe that there exists a number $\delta > 0$ such that
	\begin{align*}
		\sup_{c' \in C'} \big\langle c', S \big\rangle_{\langle \calK(X;Y)', \calK(X;Y) \rangle} 
		\ge  
		\delta \norm{S}
	\end{align*}	
	for all $0 \le S \in \calK(X;Y)$. 
	This follows from uniform decomposition theorem in spaces with generating closed wedges \cite[Proposition~1.1.2]{BattyRobinson1984}
	since both $X_+$ and $Y'_+$ are generating 
	(the fact that $Y_+'$ is generating follows from the normality of $Y_+$ \cite[Theorem~4.5]{KrasnoselskijLifshitsSobolev1989}).
	As the set $C'$ can be checked to determine positivity of operators in $\calK(X;Y)$, 
	Theorem~\ref{thm:dini} is applicable and gives the claim.
\end{proof}

Observe that there is only one step in the previous proof where we used that we worked with compact operators 
(rather than operators that are merely bounded): 
for the weak* compactness of the set $C'$ one needs that a compact operator maps bounded weakly convergent nets 
to norm convergent nets 
(since, on a Banach space $X$, the dual pairing $\langle \argument, \argument \rangle_{\langle X', X \rangle}$ 
is not jointly continuous with respect to the weak* topology on $X'$ and the weak topology on $X$).

\begin{corollary}
	\label{cor:compact-operators-oc-norm}
	Let $X,Y$ be Banach lattices, where $X$ is reflexive and $Y$ has order continuous norm. 
	Then in the ordered Banach space $\calK(X;Y)$ of all compact linear operators from $X$ to $Y$ 
	every increasing order bounded sequence is norm convergent,  
	i.e.\ the space $\calK(X;Y)$ satisfies the equivalent conditions of Theorem~\ref{thm:oc-norm}.
\end{corollary}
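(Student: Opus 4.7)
The plan is to reduce the claim to Corollary~\ref{cor:compact-operators-weak-norm} by producing a candidate limit $T \in \calK(X;Y)$ together with convergence in the weak operator topology. Let $(T_n)$ be an increasing sequence in $\calK(X;Y)$ with $T_n \le S$ for some $S \in \calK(X;Y)$; by subtracting $T_1$ I may assume $0 \le T_n \le S$. For every $x \in X_+$ the sequence $(T_n x)_n$ is increasing in $Y_+$ and dominated by $Sx$. Since $Y$ has order continuous norm, the Levi property holds in $Y$, so $(T_n x)$ converges in norm to some element $Tx \in Y_+$. A routine check using the decomposition $X = X_+ - X_+$ (valid in any Banach lattice) shows that $T$ extends to a well defined positive linear operator $T\colon X \to Y$, that $T_n \to T$ in the strong operator topology, and that $0 \le T_n \le T \le S$; in particular $T$ is bounded, because positive linear maps from a Banach lattice are automatically continuous.

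The crucial step is to verify that this strong limit $T$ is compact. The operator $T$ is dominated in the operator order by the compact operator $S$, which is exactly the hypothesis of the Dodds--Fremlin theorem: if $E, F$ are Banach lattices such that both $E'$ and $F$ have order continuous norm, then every positive operator dominated by a compact operator is itself compact. In the present setting, $X$ is a reflexive Banach lattice, so $X'$ is reflexive and therefore has order continuous norm, while $Y$ has order continuous norm by hypothesis. The Dodds--Fremlin theorem thus yields $T \in \calK(X;Y)$.

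Strong operator convergence implies weak operator convergence, so the compact operators $T_n$ converge to the compact operator $T$ in the weak operator topology. All remaining hypotheses of Corollary~\ref{cor:compact-operators-weak-norm} are automatic for Banach lattices ($X_+$ is generating via $x = x^+ - x^-$ and $Y_+$ is normal with constant $1$), and $X$ is reflexive by assumption, so the corollary applies and gives $T_n \to T$ in operator norm. This verifies the Levi property for $\calK(X;Y)$. The main obstacle is the step guaranteeing compactness of the strong limit; this is where the nontrivial Dodds--Fremlin theorem enters, and it is also the reason why both the order continuity of $Y$ \emph{and} the reflexivity of $X$ are needed. Every other step is a routine consequence of the Levi property in $Y$ together with the normality of the positive cone in $\calK(X;Y)$.
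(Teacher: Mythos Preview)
Your proof is correct and follows essentially the same route as the paper: construct the strong limit $T$ via the Levi property of $Y$, invoke the Dodds--Fremlin domination theorem (using that reflexivity of $X$ gives order continuity of the norm on $X'$) to obtain $T\in\calK(X;Y)$, and then apply Corollary~\ref{cor:compact-operators-weak-norm}. The paper cites the domination result as \cite[Theorem~16.20]{AliprantisBurkinshaw1985} or \cite[Theorem~3.7.13]{MeyerNieberg1991} rather than naming it Dodds--Fremlin, but the argument is the same.
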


\begin{proof}
	It suffices to check condition~\ref{thm:oc-norm:itm:sequence} in the theorem for increasing sequences in the positive cone. 
	So let $(T_n) \subseteq \calK(X;Y)$ be an increasing sequence such that $0 \le T_n \le S$ 
	for an operator $S \in \calK(X;Y)$ and all indices $n$. 
	For every $x \in X_+$ the sequence $(T_nx)$ in $Y_+$ is increasing and bounded from above by $Sx$, 
	so the sequence converges in norm to a vector in $[0,Sx]$. 
	From this one easily gets that $(T_n)$ converges strongly to a bounded linear operator $T: X \to Y$ 
	such that $0 \le T \le S$. 
	
	As both $X'$ and $Y$ have order continuous norm, 
	compactness of positive operators is inherited under domination, 
	see e.g.\ \cite[Theorem~16.20]{AliprantisBurkinshaw1985} or \cite[Theorem~3.7.13]{MeyerNieberg1991}.
	Hence, $T \in \calK(X;Y)$. 
	Since $X$ is reflexive we can now apply Corollary~\ref{cor:compact-operators-weak-norm} 
	to obtain operator norm convergence of $(T_n)$ to $T$. 
	Thus, $\calK(X;Y)$ does indeed satisfy the equivalent conditions of Theorem~\ref{thm:oc-norm}.
\end{proof}

Interestingly, there is also a situation where the same conclusion as in Corollary~\ref{cor:compact-operators-oc-norm} holds, 
but where $X$ is not assumed to be reflexive. 
Recall that a Banach space $Y$ is said to have the \emph{approximation property} if the identity operator on $Y$ 
can be approximated, uniformly on compact sets, by compact linear operators. 
Equivalently, for every Banach space $X$ every compact linear operator $X \to Y$ can be approximated in operator norm 
by finite rank operators. 
Hence, if $X'$ and $Y$ are separable (with respect to the norm topology) and $Y$ has the approximation property, 
then the space $\calK(X;Y)$ of compact linear operators is separable with respect to the operator norm.
From this one gets the following result,  
whose assertion and proof were kindly brought to the author's attention by Wolfgang Arendt. 

\begin{corollary}
	\label{cor:compact-operators-oc-norm-non-reflexive} 
	Let $X,Y$ be Banach lattices, where $X'$ and $Y$ are separable and have order continuous norm 
	and where $Y$ has the approximation property. 
	Then in the ordered Banach space $\calK(X;Y)$ of all compact linear operators from $X$ to $Y$ 
	every increasing order bounded sequence is norm convergent,  
	i.e.\ the space $\calK(X;Y)$ satisfies the equivalent conditions of Theorem~\ref{thm:oc-norm}.
\end{corollary}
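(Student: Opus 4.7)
The plan is to apply Theorem~\ref{thm:separable} directly to the ordered Banach space $\calK(X;Y)$. This requires checking three things: that $\calK(X;Y)$ is separable, that its cone of positive operators is normal, and that every increasing order bounded sequence in $\calK(X;Y)$ admits a supremum.

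Separability is handed to us by the paragraph preceding the statement: separability of $X'$ and $Y$ together with the approximation property of $Y$ yields that every compact operator $X\to Y$ is an operator norm limit of finite rank operators of the form $\sum_{k} x'_k(\argument)\, y_k$, from which norm separability of $\calK(X;Y)$ follows. Normality of the cone in $\calK(X;Y)$ is inherited from the Banach lattice $Y$: if $0 \le T_1 \le T_2$ in $\calL(X;Y)$, then for each $x \in X_+$ with $\norm{x}\le 1$ one has $0 \le T_1 x \le T_2 x$, and the normality of $Y_+$ together with the lattice decomposition $x = x^+ - x^-$ in $X$ yields a uniform estimate $\norm{T_1} \le C \norm{T_2}$; this passes to the closed subspace $\calK(X;Y)$.

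The key step is the existence of suprema. Let $(T_n) \subseteq \calK(X;Y)$ be an increasing sequence with $0 \le T_n \le S$ for some $S \in \calK(X;Y)$. For each $x \in X_+$ the sequence $(T_n x)$ is increasing in $Y_+$ and dominated by $Sx$; since $Y$ is a Banach lattice with order continuous norm, Remark~\ref{rems:oc-norm}\ref{rems:oc-norm:itm:lattice-case} gives that $(T_n x)$ is norm convergent in $Y$. Extending by linearity through $x = x^+ - x^-$ produces a positive linear operator $T \colon X \to Y$ with $T_n \to T$ in the strong operator topology and $0 \le T \le S$. To see that $T \in \calK(X;Y)$ one invokes the Aliprantis--Burkinshaw domination theorem (\cite[Theorem~16.20]{AliprantisBurkinshaw1985} or \cite[Theorem~3.7.13]{MeyerNieberg1991}): since both $X'$ and $Y$ have order continuous norm and $S$ is compact, any positive operator dominated by $S$ is compact. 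Finally, $T$ is the supremum of $(T_n)$ in $\calK(X;Y)$, since any upper bound $T' \in \calK(X;Y)$ satisfies $T' x \ge T_n x$ for each $x \in X_+$, hence $T' x \ge T x$ on $X_+$ in the limit, i.e.\ $T' \ge T$.

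With these three ingredients verified, Theorem~\ref{thm:separable} immediately yields that every increasing order bounded sequence in $\calK(X;Y)$ is norm convergent. The genuine obstacle is compactness of the strong operator limit $T$: strong operator limits of compact operators need not be compact in general, and it is exactly the conjunction of order continuity of the norm on both $X'$ and $Y$ -- via the domination theorem -- that forces $T$ back into $\calK(X;Y)$. Everything else is a matter of assembling the hypotheses into the hypothesis of Theorem~\ref{thm:separable}.
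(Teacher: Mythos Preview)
Your proof is correct and follows essentially the same route as the paper: invoke separability of $\calK(X;Y)$ via the approximation property, show that increasing order bounded sequences have suprema by using order continuity of the norm on $Y$ for strong convergence and the domination theorem for compactness of the limit, and then apply Theorem~\ref{thm:separable}. The only difference is that you explicitly verify normality of the positive cone in $\calK(X;Y)$, which the paper leaves implicit; this is a small improvement in completeness rather than a different approach.
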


Interestingly, the proof of Corollary~\ref{cor:compact-operators-oc-norm-non-reflexive} does not rely 
on a version of Dini's theorem but instead on Theorem~\ref{thm:separable}:

\begin{proof}[Proof of Corollary~\ref{cor:compact-operators-oc-norm-non-reflexive}]
	As discussed before the corollary, the separability of $X'$ and $Y$ and the approximation property of $Y$ 
	imply that $\calK(X;Y)$ is separable. 
	So by Theorem~\ref{thm:separable} it suffices to show that every increasing order bounded sequence in $\calK(X;Y)$ has a supremum. 
	Let $(T_n)$ be such a sequence, say in the positive cone, and let $S \in \calK(X;Y)$ be an upper bound of $(T_n)$.
	The ordered boundedness together with the assumption that $Y$ has order continuous norm, 
	implies that $(T_n)$ converges strongly to an operator $T \in \calL(X,Y)$. 
	Clearly, $0 \le T \le S$. 
	As in the previous corollary, the order continuity of the norms of $X'$ and $Y$ implies 
	that compactness of operators is inherited under domination 
	(see e.g.\ \cite[Theorem~16.20]{AliprantisBurkinshaw1985} or \cite[Theorem~3.7.13]{MeyerNieberg1991}), 
	so $T$ is also in $\calK(X;Y)$. 
	The strong convergence of $(T_n)$ to $T$ now gives that $T$ is the supremum of this sequence within $\calK(X;Y)$.
\end{proof}

The assumption that $X'$ be separable and has order continuous norm is, for instance, satisfied 
if $X$ is separable and reflexive -- but for reflexive $X$, Corollary~\ref{cor:compact-operators-oc-norm} 
gives the same conclusion anyway and has weaker additional assumptions, so this case is not interesting.
The space $X = c$ of all convergent sequences is an example 
where Corollary~\ref{cor:compact-operators-oc-norm-non-reflexive} can be applied 
while Corollary~\ref{cor:compact-operators-oc-norm} cannot. 
Another such example is the space $X = c_0$ of all sequences that converge to $0$.

Note that in both Corollaries~\ref{cor:compact-operators-oc-norm} and~\ref{cor:compact-operators-oc-norm-non-reflexive} 
the assumption that $X$ and $Y$ be Banach lattices is only needed to have criteria available which imply 
that compactness of operators is inherited under domination. 
The author is not aware of any such criterion in the more general setting of ordered Banach spaces.

\section{When is the bidual wedge a cone?}
\label{sec:bidual-wedge-pointed}

In the subsequent Section~\ref{sec:normality-dini} we will discuss to which extent normality is needed 
for a Dini type result such as Corollary~\ref{cor:dini-weak-conv} to hold. 
As a preparation we use the present section to discuss for an ordered Banach space $X$ 
under which conditions the bidual wedge $X''_+$ is pointed, i.e.\ in other words, a cone. 
Note that it follows from the Hahn--Banach theorem that $X''_+$ is a cone if and only if the span of $X'_+$ is norm dense 
(equivalently, weakly dense) in $X'$.

The question when $X''_+$ is a cone is related to (non-)normality of $X_+$ as follows: 
if $X_+$ is a normal cone, then the dual wedge $X'_+$ is generating in $X'$ and hence the bidual wedge $X''_+$ 
is again a cone (and in fact even a normal cone). 
Thus, the condition that the bidual wedge $X''_+$ be a cone is weaker 
than the condition that the cone $X_+$ be normal.
By comparing the following two propositions one can see a nice analogy between $X_+$ being normal and $X''_+$ being a cone.
The first proposition is a version of \cite[Theorem~2.23]{AliprantisTourky2007}; 
we include it here to provide context for the second proposition and since it is used in 
Example~\ref{exas:bidual-cone}\ref{exas:bidual-cone:itm:ell-1} below.

\begin{proposition}
	\label{prop:normal-dom}
	For an ordered Banach space $X$ the following are equivalent:
	\begin{enumerate}[label=\upshape(\roman*)]
		\item\label{prop:normal-dom:itm:normal}
		The cone $X_+$ is normal. 
		
		\item\label{prop:normal-dom:itm:dom-bdd} 
		If two norm bounded nets $(x_j)_{j \in J}$ and $(y_j)_{j \in J}$ in $X$ satisfy 
		$0 \le x_j \le y_j$ for all $j$ 
		and if $y_j \to 0$ in norm, then also $x_j \to 0$ in norm. 
		
		\item\label{prop:normal-dom:itm:dom-bdd-sequence}
		If two norm bounded sequences $(x_n)$ and $(y_n)$ in $X$ satisfy 
		$0 \le x_n \le y_n$ for all $n$ 
		and if $y_n \to 0$ in norm, then also $x_n \to 0$ in norm. 
		
		\item\label{prop:normal-dom:itm:dom} 
		If two (not necessarily norm bounded) nets $(x_j)_{j \in J}$ and $(y_j)_{j \in J}$ in $X$ satisfy 
		$0 \le x_j \le y_j$ for all $j$ 
		and if $y_j \to 0$ in norm, then also $x_j \to 0$ in norm. 
	\end{enumerate}
\end{proposition}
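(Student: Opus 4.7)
My plan is to prove the cycle \ref{prop:normal-dom:itm:normal} $\Rightarrow$ \ref{prop:normal-dom:itm:dom} $\Rightarrow$ \ref{prop:normal-dom:itm:dom-bdd} $\Rightarrow$ \ref{prop:normal-dom:itm:dom-bdd-sequence} $\Rightarrow$ \ref{prop:normal-dom:itm:normal}. The first three implications are essentially built into the definitions, while the last implication carries all the content and is proved by contrapositive through an explicit rescaling argument.

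For \impliesProof{prop:normal-dom:itm:normal}{prop:normal-dom:itm:dom}, I would simply invoke the normality constant $C \ge 0$ with $\norm{x} \le C \norm{y}$ for $0 \le x \le y$; then $0 \le x_j \le y_j$ and $\norm{y_j} \to 0$ immediately force $\norm{x_j} \le C \norm{y_j} \to 0$, with no boundedness needed on the nets. The implications \impliesProof{prop:normal-dom:itm:dom}{prop:normal-dom:itm:dom-bdd} and \impliesProof{prop:normal-dom:itm:dom-bdd}{prop:normal-dom:itm:dom-bdd-sequence} are then trivial: the first because the dominated conclusion is stated with a formally weaker hypothesis, the second because every sequence is a net.

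The heart of the matter is \impliesProof{prop:normal-dom:itm:dom-bdd-sequence}{prop:normal-dom:itm:normal}, which I would prove by contrapositive. Suppose $X_+$ is not normal. Then for every $n \in \bbN$ the normality constant $n$ fails, so there exist vectors $u_n, v_n \in X$ with $0 \le u_n \le v_n$ and $\norm{u_n} > n \norm{v_n}$. Since $\norm{u_n} > 0$, I rescale by setting $x_n := u_n / \norm{u_n}$ and $y_n := v_n / \norm{u_n}$; this preserves the sandwich $0 \le x_n \le y_n$ and yields $\norm{x_n} = 1$ together with $\norm{y_n} < 1/n$. Both sequences are then norm bounded (by $1$), $y_n \to 0$ in norm, but $x_n \not\to 0$, which directly contradicts~\ref{prop:normal-dom:itm:dom-bdd-sequence}.

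There is no real obstacle; the only point requiring any care is the observation that in the failure of normality one may pick $u_n$ with $\norm{u_n} > 0$ (otherwise $u_n = 0$ and the inequality $\norm{u_n} > n \norm{v_n}$ would force $v_n = 0$, making the pair useless), so that the division by $\norm{u_n}$ makes sense and simultaneously bounds the second sequence.
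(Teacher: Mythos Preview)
Your proof is correct and follows essentially the same route as the paper: the same cycle of implications, with the only nontrivial step \ref{prop:normal-dom:itm:dom-bdd-sequence} $\Rightarrow$ \ref{prop:normal-dom:itm:normal} handled by contrapositive via the identical rescaling trick (normalize the dominated vector so that the dominating one is forced to vanish). The only cosmetic difference is that the paper obtains its witnesses from the characterization that non-normality yields an unbounded order interval $[0,y]$ (citing \cite[Theorem~2.40]{AliprantisTourky2007}), whereas you negate the definition of normality directly; your version is slightly more self-contained.
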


\begin{proof}
	``\ref{prop:normal-dom:itm:normal} $\Rightarrow$ 
	\ref{prop:normal-dom:itm:dom} $\Rightarrow$ 
	\ref{prop:normal-dom:itm:dom-bdd} $\Rightarrow$ 
	\ref{prop:normal-dom:itm:dom-bdd-sequence}''
	These implications are obvious.

	\impliesProof{prop:normal-dom:itm:dom-bdd-sequence}{prop:normal-dom:itm:normal} 
	We show the contrapositive, so let $X_+$ be non-normal. 
	Then there exists a vector $y \in X_+$ such that the order interval $[0,y]$ is not norm bounded 
	\cite[Theorem~2.40(1) and~(4)]{AliprantisTourky2007}. 
	Thus, there exists a sequence $(w_n)$ in $[0,y]$ such that $0 < \norm{w_n} \to \infty$. 
	Define $x_n := \frac{w_n}{\norm{w_n}}$ and $y_n := \frac{y}{\norm{w_n}}$ for each $n$. 
	Then $0 \le x_n \le y_n$ for every $n$ and $y_n \to 0$ in norm. 
	The sequence $(x_n)$, on the other hand, is normalized and does thus not converge to $0$ in norm. 
	As both sequences $(x_n)$ and $(y_n)$ are norm bounded, 
	it follows that~\ref{prop:normal-dom:itm:dom-bdd-sequence} fails.
\end{proof}

\begin{proposition}
	\label{prop:bidual-cone-dom}
	For an ordered Banach space $X$ the following are equivalent:
	\begin{enumerate}[label=\upshape(\roman*)]
		\item\label{prop:bidual-cone-dom:itm:cone} 
		The bidual wedge $X''_+$ is pointed, i.e.\ a cone. 
		
		\item\label{prop:bidual-cone-dom:itm:dom} 
		If two norm bounded nets $(x_j)_{j \in J}$ and $(y_j)_{j \in J}$ in $X$ satisfy 
		$0 \le x_j \le y_j$ for all $j$ 
		and if $y_j \to 0$ weakly, then also $x_j \to 0$ weakly.  
	\end{enumerate}
\end{proposition}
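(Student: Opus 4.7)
The plan is to pass through the Hahn--Banach reformulation noted just before the proposition: $X''_+$ is pointed if and only if the linear span $X'_+ - X'_+$ is norm dense in $X'$. Indeed, if the span is dense then any $x'' \in X''$ vanishing on $X'_+$ vanishes on all of $X'$, forcing $x'' = 0$; conversely, a non-dense span yields, via Hahn--Banach, a non-zero $x'' \in X''$ annihilating $X'_+$, and such an $x''$ then lies in $X''_+ \cap (-X''_+)$.

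For the implication \ref{prop:bidual-cone-dom:itm:cone} $\Rightarrow$ \ref{prop:bidual-cone-dom:itm:dom} I would first observe that the sandwich $0 \le x_j \le y_j$ tested against any $x' \in X'_+$ gives $0 \le \langle x', x_j \rangle \le \langle x', y_j \rangle \to 0$. By linearity, $\langle x', x_j \rangle \to 0$ for every $x'$ in the span of $X'_+$. Given an arbitrary $u' \in X'$ and $\varepsilon > 0$, and using that $\|x_j\| \le M$ for some $M$, I would choose $x'$ in $X'_+ - X'_+$ with $\|u' - x'\| < \varepsilon/(2M)$ and apply the bound $|\langle u', x_j \rangle| \le |\langle x', x_j \rangle| + M \|u' - x'\|$ to conclude $\langle u', x_j \rangle \to 0$. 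Hence $x_j \to 0$ weakly.

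For \ref{prop:bidual-cone-dom:itm:dom} $\Rightarrow$ \ref{prop:bidual-cone-dom:itm:cone} I would argue contrapositively. If $X''_+$ is not a cone, pick $x'' \in X''_+ \cap (-X''_+)$ with $x'' \ne 0$ and, after rescaling, $\|x''\| \le 1$; then both $x''$ and $-x''$ sit in the positive unit ball of $X''$. By Theorem~\ref{thm:positive-ball-bidual} there are nets $(a_\alpha)$ and $(b_\beta)$ in the positive unit ball of $X$ converging weak* to $x''$ and to $-x''$, respectively. Reindexing over the product directed set, I would set $y_j := a_j + b_j$ and $x_j := a_j$: both nets lie in $X_+$ with norms bounded by $2$, one has $0 \le x_j \le y_j$, and $y_j \to 0$ weak* in $X''$, which for a net inside $X$ coincides with weak convergence to $0$ in $X$. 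However, $(x_j)$ cannot converge weakly to $0$, for otherwise $\langle x'', x' \rangle = \lim_\alpha \langle x', a_\alpha \rangle = 0$ for every $x' \in X'$, so $x'' = 0$, a contradiction. This refutes~\ref{prop:bidual-cone-dom:itm:dom}.

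The main obstacle is the second direction, where we need to manufacture concrete nets in $X$ out of an abstract functional in $X''$; Theorem~\ref{thm:positive-ball-bidual} from the appendix is exactly the tool that makes this possible, which explains why that density result is singled out. A smaller but essential point in the first direction is not to forget the uniform norm bound on $(x_j)$, without which the approximation of a general $u' \in X'$ by elements of $X'_+ - X'_+$ would not survive the passage to the limit.
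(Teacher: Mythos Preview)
Your argument is correct in both directions. The second implication \ref{prop:bidual-cone-dom:itm:dom} $\Rightarrow$ \ref{prop:bidual-cone-dom:itm:cone} is essentially identical to the paper's: both rely on Theorem~\ref{thm:positive-ball-bidual} to produce positive approximants of $x''$ and $-x''$, and the only difference is cosmetic --- you obtain a common index set via the product of two directed sets, while the paper uses the neighbourhood filter of $0$ as a universal index set.

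The forward implication \ref{prop:bidual-cone-dom:itm:cone} $\Rightarrow$ \ref{prop:bidual-cone-dom:itm:dom} is where you genuinely diverge. The paper embeds $X$ into $X''$, passes to weak*-convergent subnets of $(x_j)$ via Banach--Alaoglu, and uses the pointedness of $X''_+$ directly to identify every subnet limit as $0$. You instead translate pointedness of $X''_+$ into norm density of $X'_+ - X'_+$ in $X'$, squeeze $\langle x', x_j\rangle$ for $x' \in X'_+$, and then use the uniform bound on $\norm{x_j}$ to pass from the dense subspace to all of $X'$. Your route is more elementary --- it avoids subnets and compactness entirely --- and makes explicit why the norm boundedness hypothesis on $(x_j)$ is indispensable. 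The paper's route, on the other hand, keeps the argument parallel to the analogous Proposition~\ref{prop:normal-dom} and perhaps generalizes more readily to other weak-type topologies. Both are clean; yours is arguably the more transparent of the two.
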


\begin{proof}
	\impliesProof{prop:bidual-cone-dom:itm:cone}{prop:bidual-cone-dom:itm:dom}
	Assume that $X''_+$ is a cone 
	and let $(x_j)_{j \in J}$ and $(y_j)_{j \in J}$ be as in~\ref{prop:bidual-cone-dom:itm:dom}. 
	Consider $X$ as a subspace of $X''$. 
	We first note that if $(x_j)_{j \in J}$ is weak* convergent in $X''$, then its limit is $0$, 
	and thus $x_j \to 0$ weakly in $X$ in this case.
	Indeed, if the limit is denoted by $x'' \in X''$, 
	then the weak convergence $y_j \to 0$ and the domination $0 \le x_j \le y_j$ for all $j \in J$ 
	imply that $0 \le x'' \le 0$. 
	Since $X''_+$ is a cone, it follows that $x'' = 0$. 
	
	The previous observation can be applied to every subnet of $(x_j)_{j \in J}$. 
	This together with the Banach--Alaoglu theorem implies that every subnet of $(x_j)_{j \in J}$ 
	has a subnet that weakly converges to $0$ in $X$. 
	Hence, $(x_j)_{j \in J}$ is itself weakly convergent to $0$.
	
	\impliesProof{prop:bidual-cone-dom:itm:dom}{prop:bidual-cone-dom:itm:cone}
	We prove the contrapositive, so assume that $X''_+$ is not a cone. 
	Then there exists a non-zero vector $x'' \in X''_+$, say of norm $\norm{x''} = 1$, such that $0 \le x'' \le 0$. 
	
	According to Theorem~\ref{thm:positive-ball-bidual} in the appendix, 
	the positive closed unit ball $B_+ := \{x \in X_+ \suchthat \norm{x} \le 1\}$ of $X$ is weak* dense 
	in the positive closed unit ball $B''_+ := \{x'' \in X''_+ \suchthat \norm{x''} \le 1\}$ of $X''$. 
	As both $x''$ and $-x''$ are an element of $B''_+$ we can find two nets $(x_j)_{j \in J}$ and $(w_j)_{j \in J}$ 
	in $B_+$ that are weak* convergent to $x''$ and $-x''$, respectively. 
	Note that we can choose both nets to have the same index set: 
	whenever a point $p$ is in the closure of a subset $S$ of a topological vector space, 
	there exists a net in $S$ that converges to $p$ and whose index set is the neighbourhood filter of $0$.
	
	Both nets $(x_j)_{j \in J}$ and $(y_j)_{j \in J} := (x_j + w_j)_{j \in J}$ are norm bounded and located in $X_+$ 
	and they satisfy $x_j \le y_j$ for each $j$. 
	One has $y_j \to x''-x'' = 0$ with respect to the weak* topology in $X''$ and thus, weakly in $X$. 
	However, $x_j \to x'' \not= 0$ with respect to the weak* topology in $X''$, 
	so $(x_j)_{j \in J}$ does not converge weakly to $0$ in $X$.
\end{proof}

Remarkably, the characterizations in Propositions~\ref{prop:normal-dom} and~\ref{prop:bidual-cone-dom} 
do not immediately unveil the fact, mentioned before Proposition~\ref{prop:normal-dom}, 
that normality of the cone $X_+$ implies that the bidual wedge $X''_+$ is a cone.
Let us now discuss for a few examples and example classes whether the bidual wedge is a cone. 

\begin{examples}
	\label{exas:bidual-cone}
	\begin{enumerate}[label=(\alph*)]
		\item\label{exas:bidual-cone:itm:reflexive} 
		If an ordered Banach space $X$ is reflexive, 
		then the bidual wedge $X''_+$ is a cone. 
		In fact, the canonical identification of $X$ with $X''$ is order preserving in both directions, 
		so one has $X_+ = X''_+$ under this identification.

		\item\label{exas:bidual-cone:itm:sobolev-reflexive} 
		Let $k \ge 1$ be an integer and $p \in (1,\infty)$ 
		and endow the Sobolev space $X := W^{k,p}(-1,1)$ with the pointwise almost everywhere order inherited from $L^p(-1,1)$. 
		Then the bidual wedge $X''_+$ is a cone according to~\ref{exas:bidual-cone:itm:reflexive} since $X$ is reflexive.

		\item\label{exas:bidual-cone:itm:C_k} 
		On the other hand, let $k \ge 1$ be an integer and let $X := C^k([-1,1])$ denote the Banach space 
		of $k$-times continuously differentiable functions on $[-1,1]$ 
		with the norm $\norm{f}_{C^k} := \max\{\norm{f^{(j)}}_\infty \suchthat 0 \le j \le k\}$ 
		and the pointwise order. 
		The the bidual wedge $X''_+$ is not a cone. 
		
		To see this, we use the characterization in Proposition~\ref{prop:bidual-cone-dom}. 
		Choose a sequence of functions $(f_n)$ in $C^k([-1,1])$ such that $0 \le f_n \le \frac{1}{n} \one$, 
		$\norm{f_n}_{C^k} \le 1$ and $f_n^{(k)}(0) = 1$ for each $§n \ge 1$. 
		The existence of such a function can, for each $n$, be seen as follows: 
		start with a continuous function that maps into $[0,1]$, has integral $\frac{1}{n}$ and maps $0$ to $1$; 
		then integrate it $k$ times from $-1$ to its argument in order to get $f_n$.
		
		The sequence $(\frac{1}{n}\one)$ converges to $0$ in norm and thus, in particular, weakly. 
		However, the sequence $(f_n)$ does not converges weakly to $0$ 
		since $f_n^{(k)}(0) = 1$ for each $n$ and since $g \mapsto g^{(k)}(0)$ is a continuous linear functional on $C^k([0,1])$. 
		Thus, we can apply Proposition~\ref{prop:bidual-cone-dom}\ref{prop:bidual-cone-dom:itm:dom} 
		to the sequences $(f_n)$ and $(\frac{1}{n}\one)$ to see that $X''_+$ is not a cone.

		\item\label{exas:bidual-cone:itm:sobolev-one} 
		Again, let $k \ge 1$ be an integer and endow the Sobolev space $X := W^{k,1}(-1,1)$ with the pointwise almost everywhere order 
		inherited from $L^1(-1,1)$. 
		Then the bidual wedge $X''_+$ is not a cone. 
		
		To see this, we will again use Proposition~\ref{prop:bidual-cone-dom}. 
		We first define a (non-positive) sequence $(h_n)$ in $L^1$ in the following way. 
		Choose a strictly increasing sequence $(x_n)$ in $[0,1)$ that converges to $1$ 
		and that satisfies $\frac{x_{n+1} - x_n}{1-x_n} \to 0$. 
		Such a sequence $(x_n)$ exists; 
		for instance, it is not difficult to check that the numbers 
		$x_n := \frac{1}{\sum_{k=1}^\infty \frac{1}{k^2}} \sum_{k=1}^n \frac{1}{k^2}$ satisfy the required properties 
		(but it is worthwhile to point out that the partial sums of, say, the geometric series 
		$\sum_{k=1}^\infty \frac{1}{2^k}$ cannot be taken as the $x_n$ since they do not satisfy $\frac{x_{n+1} - x_n}{1-x_n} \to 0$).
		
		For each index $n$ we define $h_n$ to be $0$ on the left of $x_n$; 
		for every $j \ge n$ we set $h_n$ to be equal to $\frac{1}{1-x_n}$ on the left half of the interval $[x_j, x_{j+1}]$ 
		and to be $-\frac{1}{1-x_n}$ on the right half of the same interval.
		Then $\norm{h_n}_{L^1} = 1$ for each $n$ and the sequence $h_n$ does not converge weakly to $0$ in $L^1$
		as can be seen by testing against the indicator function $\one_A \in L^\infty(-1,1)$, 
		where $A \subseteq (-1,1)$ is the union of the left halfs of all the intervals $[x_j, x_{j+1}]$.
		
		Now define $f_n \in W^{k,1}(-1,1)$ for each $n \ge 0$ by integrating the function $h_n$ 
		from $-1$ to its argument $k$ times. 
		Then each $f_n$ is in the positive cone $W^{k,1}(-1,1)_+$ and the sequence $(f_n)$ is norm bounded in 
		$W^{k,1}(-1,1)$. 
		Each $h_n$ vanishes on $(-1,0)$ and one has 
		\begin{align*}
			\int_{-1}^t h_n(s) \dx s 
			\le 
			\frac{1}{2}\frac{x_{n+1} - x_n}{1-x_n}  
		\end{align*}
		for each $n \in \bbN$ and each $t \in [0,1)$. 
		This implies that $f_n \le \frac{1}{2}\frac{x_{n+1} - x_n}{1-x_n} \one_{[0,1]}$ for each $n \in \bbN$. 
		The functions on the right of this inequality converge to $0$ with respect to the norm in $W^{k,1}(-1,1)$ 
		since $\frac{x_{n+1} - x_n}{1-x_n} \to 0$ by the choice of the points $x_n$.
		But the sequence $(f_n)$ itself does not converge weakly to $0$ in $W^{k,1}(-1,1)$ 
		since $(h_n)$ does not converges weakly to $0$ in $L^1(-1,1)$. 
		So Proposition~\ref{prop:bidual-cone-dom} shows that the bidual wedge of $W^{k,1}(-1,1)_+$ is not a cone. 
		
		\item\label{exas:bidual-cone:itm:non-reflexive-cone} 
		Here is an example of a non-reflexive space with a non-normal cone for which the bidual wedge is a cone: 
		Let $X = c_0$ be the space of real sequences that converge to $0$, say with index set $\bbN_0 := \{0,1,2,\dots\}$, 
		and endow this space with the sup norm and the cone 
		\begin{align*}
			(c_0)_+ := \Big\{x \in c_0 \suchthat x_0 \ge \sup_{k \ge 1} \frac{\modulus{x_k}}{k} \Big\}.
		\end{align*}
		It is easy to check that $(c_0)_+$ is indeed a cone and is closed. 
		The cone is not normal since $e_0 \ge n e_n$ for each $n \ge 0$, where the $e_n$ denote the canonical unit vectors. 
		The point $e_0$ is an interior point of $(c_0)_+$, 
		so $(c_0)_+$ has non-empty interior and is thus, in particular, generating.
		
		To see that the bidual wedge in $\ell^\infty$ is a cone, 
		we use the characterization from Proposition~\ref{prop:bidual-cone-dom}. 
		Let $(x_j)_{j \in J}$ and $(y_j)_{j \in J}$ be norm bounded nets in $c_0$ 
		such that $0 \le x_j \le y_j$ for all $j \in J$ 
		and assume that $(y_j)_{j \in J}$ converges weakly to $0$. 
		For each $j \in J$ it follows from $0 \le x_j \le y_j$ 
		that $0 \le (x_j)_0 \le (y_j)_0$ and 
		that $\frac{1}{k} \modulus{(y_j)_k - (x_j)_k} \le (y_j)_0 - (x_j)_0$ for all $k \ge 1$. 
		Thus, $(x_j)_{j \in J}$ converges to $0$ componentwise 
		and hence, due to the norm boundedess, also weakly. 
		So Proposition~\ref{prop:bidual-cone-dom} implies that the bidual wedge of $(c_0)_+$ is indeed a cone. 
		
		\item\label{exas:bidual-cone:itm:ell-1} 
		Let $X$ be $\ell^1$ or, more generally, any Banach space with the \emph{Schur property} 
		which means that every weakly convergent sequence in $X$ is automatically norm convergent. 
		If $X$ is endowed with a non-normal closed cone $X_+$, 
		then the bidual wedge $X''_+$ is never a cone. 
		(A concrete example of a non-normal cone in $\ell^1$ can be found in Example~\ref{exa:ell-1-non-normal} below.)
		
		To see this, assume that $X''_+$ is a cone. 
		We show that this implies normality of $X_+$ 
		by checking that condition~\ref{prop:normal-dom:itm:dom-bdd-sequence} 
		in Proposition~\ref{prop:normal-dom} is satisfied. 
		So let $(x_n)$ and $(y_n)$ be norm bounded sequences in $X_+$ such that $x_n \le y_n$ for all $n$ 
		and assume that $y_n \to 0$ in norm. 
		Then, in particular, $y_n \to 0$ weakly and since $X''_+$ is a cone, 
		Proposition~\ref{prop:bidual-cone-dom} implies that $x_n \to 0$ weakly, too. 
		As $X$ has the Schur property it follows that even $x_n \to 0$ in norm, 
		so Proposition~\ref{prop:normal-dom} implies that $X_+$ is normal.
	\end{enumerate}
\end{examples}

The main reason why we discussed the condition that $X''_+$ be a cone in this section 
is that this condition occurs in Theorem~\ref{thm:no-dini} in the next section. 
Before we proceed to this, we end the present section with a brief digression 
that shows another nice consequence if $X''_0$ is a cone. 

A positive element $u$ of an ordered Banach space $X$ is called an \emph{almost interior point} 
if $\langle x', u \rangle > 0$ for every non-zero functional $0 \le x' \in X'$. 
A detailed discussion of almost interior point along with several examples can be found in \cite[Section~2]{GlueckWeber2020}. 

\begin{proposition}
	\label{prop:almost-interior}
	Let $X, Y$ be ordered Banach spaces and let $u \in X_+$ be an almost interior point. 
	\begin{enumerate}[label=\upshape(\alph*)]
		\item\label{prop:almost-interior:itm:functionals} 
		If $(x'_j)$ is a norm bounded net in $X'_+$ such that $\langle x'_j, u \rangle \to 0$, 
		then $(x'_j)$ is weak* convergent to $0$.
		
		\item\label{prop:almost-interior:itm:operators} 
		Assume that the bidual wedge $Y''_+$ is a cone. 
		If $(T_j)$ is a norm bounded net of positive linear operators from $X$ to $Y$ and
		$T_j u \to 0$ weakly, then $T_j \to 0$ with respect to the weak operator topology.
	\end{enumerate}
\end{proposition}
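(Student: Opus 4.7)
My plan is to prove part~\ref{prop:almost-interior:itm:functionals} by a Banach--Alaoglu subnet argument, and then to bootstrap it to part~\ref{prop:almost-interior:itm:operators} via the density of $Y'_+ - Y'_+$ in $Y'$ that is guaranteed by the assumption on $Y''_+$.

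For~\ref{prop:almost-interior:itm:functionals}, I would fix an arbitrary subnet of $(x'_j)$; by Banach--Alaoglu this subnet still lies in a weak* compact ball in $X'$ and therefore admits a further weak* convergent subnet $(x'_k)$, say with limit $x' \in X'$. Weak*-closedness of $X'_+$ forces $x' \in X'_+$, and weak*-continuity of evaluation at $u$ gives $\langle x', u \rangle = \lim_k \langle x'_k, u \rangle = 0$. Almost-interiority of $u$ then forces $x' = 0$. Since every subnet of $(x'_j)$ has a further subnet that is weak* convergent to $0$, the whole net converges weak* to $0$.

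For~\ref{prop:almost-interior:itm:operators}, I would first handle the positive functionals. Fix $y' \in Y'_+$ and set $M := \sup_j \norm{T_j} < \infty$. Then the net $(y' \circ T_j)$ lies in $X'_+$, is norm bounded by $M \norm{y'}$, and satisfies $\langle y' \circ T_j, u \rangle = \langle y', T_j u \rangle \to 0$ because $T_j u \to 0$ weakly. Part~\ref{prop:almost-interior:itm:functionals} therefore yields $\langle y', T_j x \rangle \to 0$ for every $x \in X$ and every $y' \in Y'_+$. To extend this from $Y'_+$ to arbitrary $y' \in Y'$, I would invoke the observation at the beginning of Section~\ref{sec:bidual-wedge-pointed}: $Y''_+$ being a cone is equivalent, by Hahn--Banach, to $Y'_+ - Y'_+$ being norm dense in $Y'$. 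For given $y' \in Y'$, $x \in X$ and $\varepsilon > 0$, I would approximate $y'$ in norm by a difference $w' - z'$ with $w', z' \in Y'_+$ and use the uniform bound $\norm{T_j x} \le M \norm{x}$ to estimate $\modulus{\langle y', T_j x \rangle}$ by $\varepsilon M \norm{x} + \modulus{\langle w', T_j x \rangle} + \modulus{\langle z', T_j x \rangle}$. A routine $\limsup$ then gives $\langle y', T_j x \rangle \to 0$, which is precisely convergence in the weak operator topology.

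The only non-mechanical step is the passage from positive to general functionals in~\ref{prop:almost-interior:itm:operators}, and this is exactly where both hypotheses of~\ref{prop:almost-interior:itm:operators} enter in an essential way: without a uniform operator-norm bound, the $\varepsilon M \norm{x}$ error term could not be controlled uniformly in $j$, and without the cone condition on $Y''_+$ the positive functionals would not span a dense subspace of $Y'$ and hence could not suffice to detect weak convergence in $Y$. Everything else is a straightforward application of weak* compactness.
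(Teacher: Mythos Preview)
Your proof is correct and follows essentially the same route as the paper: the Banach--Alaoglu subnet argument for part~\ref{prop:almost-interior:itm:functionals} is identical, and in part~\ref{prop:almost-interior:itm:operators} the paper likewise applies part~\ref{prop:almost-interior:itm:functionals} to the net $(T_j'y') = (y' \circ T_j)$ and then passes from $Y'_+$ to all of $Y'$ via the norm density of $\linSpan Y'_+$ and the norm boundedness of $(T_j x)$. The only difference is that you spell out the $\varepsilon$-approximation explicitly, whereas the paper just invokes the density together with norm boundedness in one sentence.
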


\begin{proof}
	\ref{prop:almost-interior:itm:functionals} 
	First note that if $(x'_j)$ is weak* convergent, then its limit has to be $0$. 
	Indeed, if $x'$ denotes the limit, then $x' \ge 0$ and $\langle x', u \rangle = 0$, 
	so $x' = 0$ since $u$ is an almost interior point.
	
	By applying this observation to subnets of $(x'_j)$ and using the Banach--Alaoglu theorem 
	we see that every subnet of $(x'_j)$ has a subnet that converges weak* to $0$, 
	so $(x'_j)$ is itself weak* convergent to $0$.
	
	\ref{prop:almost-interior:itm:operators} 
	Fix $x \in X$. 
	For every $y' \in Y'_+$ one has $\langle T_j' y', u \rangle \to 0$ by assumption, 
	so it follows from~\ref{prop:almost-interior:itm:functionals} that the norm bounded net $(T_j'y')$ 
	converges weak* to $0$ in $X'$. 
	Thus, $\langle T_j x, y' \rangle \to 0$ for each $y'$ in the span of $Y'_+$. 
	Since $Y''_+$ is a cone in $Y''$ by assumption, the span of $Y'_+$ is norm dense in $Y'$ 
	and hence we conclude that the norm bounded net $(T_j x)$ in $Y$ converges weakly to $0$.
\end{proof}

\section{Normality of the cone in Dini's theorem}
\label{sec:normality-dini}

It is not difficult to see that the assumptions of Theorem~\ref{thm:dini} imply that the cone $X_+$ is normal. 
Indeed, this is a consequence of the existence of a norm bounded set $C' \subseteq X'_+$ that satisfies property~(3) in the theorem.
It is natural to ask whether there exist examples of ordered Banach spaces whose cone is not normal, 
but in which a version of Dini's theorem still holds. 

Towards this end, the following observation was pointed out in \cite{WongNg1975}: 
the space $\ell^1$ is well-known to have the \emph{Schur property}, 
i.e., every weakly convergent sequence is automatically norm convergent. 
Thus, if one endows $\ell^1$ with any closed non-normal cone, then one has constructed in ordered Banach space 
with non-normal cone in which every increasing weakly convergent sequence is norm convergent. 
For the sake of completeness, let us show by a concrete example that there exists a non-normal cone in $\ell^1$.

\begin{example}
	\label{exa:ell-1-non-normal} 
	Let the underlying index set of the real Banach space $\ell^1$ be $\bbN_0 := \{0,1,2,\dots\}$.
	The set 
	\begin{align*}
		\ell^1_+ := \Big\{x \in \ell^1 \suchthat x_0 \ge \sum_{k=1}^\infty \frac{\modulus{x_k}}{2^k} \Big\}
	\end{align*}
	is a closed cone in $\ell^1$ which is not normal, but which has non-empty interior and is thus generating.
\end{example}

\begin{proof}
	It is straightforward to check that the set $\ell^1_+$ is a closed cone. 
	The $0$-th canonical unit vector $e_0$ is an interior point of $\ell^1_+$. 
	Hence, $\ell^1_+$ has non-empty interior and is thus, in particular, generating. 
	
	To see that $\ell^1_+$ is not normal, let $e_k \in \ell^1$ denote the $k$-th canonical unit vector for each $k \in \bbN_0$. 
	For every $k \ge 1$ one then has $e_0 \ge 2^k e_k$ with respect to the order induced by $\ell^1_+$, 
	which shows the non-normality.
\end{proof}

Note that the Schur property of $\ell^1$ is a condition for sequences only 
-- there exist weakly convergent nets in $\ell^1$ that are not norm convergent. 
It is not clear -- at least not to the author -- 
whether in the space $\ell^1$ endowed with the cone from Example~\ref{exa:ell-1-non-normal} 
every increasing weakly convergent net is norm convergent.
This raises the question whether the preceding example is only an artefact 
of the fact that one considers sequences instead of nets or whether, on the other hand, 
the validity of Dini's theorems for all nets implies that the cone is normal. 
Let us pose this as an open problem:

\begin{open_problem}
	\label{op:dini-normal}
	Let $X$ be an ordered Banach space and assume that every weakly convergent increasing net 
	is norm convergent. 
	Does it follow that $X_+$ is normal?
\end{open_problem}

Let us also note that, since a weakly convergent net need not be eventually norm bounded, 
the answer to Open Problem~\ref{op:dini-normal} might change if one only considers weakly convergent increasing nets 
that are, in addition, norm bounded. 

Finally, we give a partial answer to Open Problem~\ref{op:dini-normal} by showing that, 
in a large class of spaces with non-normal cone, 
Dini's theorem does not hold -- not even for sequences.

\begin{theorem}
	\label{thm:no-dini}
	Let $X$ be an ordered Banach space with non-normal cone. 
	Assume that the space $X$ is reflexive or that, more generally, the bidual wedge $X''_+$ is a cone.
	Then there exists an increasing sequence in $X_+$ that converges weakly but not in norm.
\end{theorem}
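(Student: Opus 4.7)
The plan is to construct the sequence explicitly by combining a norm-convergent series of small positive vectors with a ``moving'' positive piece that is weakly null but not norm null.

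I would start by extracting witnesses of non-normality. Since $X_+$ is not normal, there exists $y \in X_+$ such that the order interval $[0,y]$ is not norm bounded (cf.\ \cite[Theorem~2.40]{AliprantisTourky2007}), so there are $w_n \in [0,y]$ with $\|w_n\| \to \infty$. After passing to a subsequence, we may assume $\|w_n\| \ge 2^n$. Setting
\[
u_n := \frac{w_n}{\|w_n\|}, \qquad v_n := \frac{y}{\|w_n\|},
\]
we obtain $0 \le u_n \le v_n$ in $X_+$ with $\|u_n\| = 1$ and $\|v_n\| \le \|y\|\cdot 2^{-n}$.

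The candidate sequence is $s_n := V_n + u_n$, where $V_n := \sum_{k=1}^{n-1} v_k$. Because the $\|v_k\|$ are summable, $(V_n)$ is increasing in $X_+$ and norm convergent to some $V \in X_+$. Then $s_n \in X_+$ as a sum of two positive vectors, and the sequence is increasing because
\[
s_{n+1} - s_n \;=\; v_n + u_{n+1} - u_n \;=\; (v_n - u_n) + u_{n+1},
\]
which is the sum of two elements of $X_+$. Moreover $\|s_n\| \le \|V\| + 1$, so $(s_n)$ is norm bounded.

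For the weak convergence I would invoke Proposition~\ref{prop:bidual-cone-dom}: the hypothesis that $X''_+$ is a cone (which holds automatically when $X$ is reflexive) applied to the norm-bounded dominated pair $(u_n)$, $(v_n)$, together with $v_n \to 0$ in norm and hence weakly, yields $u_n \to 0$ weakly. Combined with $V_n \to V$ in norm, this gives $s_n \to V$ weakly. On the other hand,
\[
\|s_n - V\| \;\ge\; \|u_n\| - \|V - V_n\| \;=\; 1 - \|V - V_n\| \;\longrightarrow\; 1,
\]
so $(s_n)$ is not norm convergent.

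The only genuine difficulty lies in choosing the right construction. A naive partial sum $\sum u_k$ need not be norm bounded, since $[0,V]$ can fail to be norm bounded in the non-normal case, and an unbounded sequence cannot converge weakly. The trick is to carry only one bump at a time: $s_{n+1}$ replaces $u_n$ by $u_{n+1}$ at the cost of the small vector $v_n$, which keeps the norms under control while letting the bumps $u_n$ supply, via Proposition~\ref{prop:bidual-cone-dom}, exactly the weak-but-not-norm convergence that we need.
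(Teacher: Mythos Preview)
Your proof is correct but takes a genuinely different route from the paper. The paper first invokes Lemma~\ref{lem:non-normal} to obtain an \emph{increasing} unbounded sequence in some order interval $[0,x]$, subtracts it from $x$ to obtain a decreasing unbounded sequence $(x_n)$ with $\|x_n\|\uparrow\infty$, and then observes that the normalization $y_n := x_n/\|x_n\|$ remains decreasing; Proposition~\ref{prop:bidual-cone-dom} then shows $y_n\to 0$ weakly, while $\|y_n\|=1$ prevents norm convergence. You bypass Lemma~\ref{lem:non-normal} entirely: your witnesses $w_n$ carry no monotonicity, and you manufacture the increasing structure by the ``moving bump plus summable compensation'' device $s_n = V_n + u_n$, where the inequality $s_{n+1}-s_n = (v_n-u_n)+u_{n+1}\ge 0$ does the work. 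Both arguments rest on Proposition~\ref{prop:bidual-cone-dom} in exactly the same way. The paper's version yields a cleaner final object (a normalized decreasing sequence), at the cost of the auxiliary lemma; your version is self-contained and arguably more direct, since it never needs to force monotonicity inside the order interval.
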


Spaces where the assumption that $X''_+$ be a cone is satisfied 
can, for instance, be found in Examples~\ref{exas:bidual-cone}\ref{exas:bidual-cone:itm:sobolev-reflexive} 
and~\ref{exas:bidual-cone:itm:non-reflexive-cone}. 
Note that, according to Example~\ref{exas:bidual-cone}\ref{exas:bidual-cone:itm:ell-1} 
that assumption that $X_+$ be non-normal and $X''_+$ be a cone 
is never satisfied if $X$ has the Schur property. 
This is consistent with the discussion before Example~\ref{exa:ell-1-non-normal}.

For the proof of Theorem~\ref{thm:no-dini} we need the following lemma.

\begin{lemma}
	\label{lem:non-normal}
	Let $X$ be an ordered Banach space and let $x \in X_+$. 
	If the order interval $[0,x]$ is not norm bounded, then there exists an increasing sequence in $[0,x]$ 
	that is not norm bounded.
\end{lemma}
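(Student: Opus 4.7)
The plan is to construct the desired increasing sequence as partial sums of a scaled series whose terms are chosen from $[0,x]$ with rapidly growing norm. The key point is that the order interval $[0,x]$ is closed under the specific kind of infinite convex combinations whose weights sum to at most one, while the geometric weights $1/2^n$ are small enough to keep the partial sums dominated by $x$ but large enough to leave room for sufficiently large terms.

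Concretely, I would first use the assumption that $[0,x]$ is not norm bounded to select, for each $n \ge 1$, a vector $w_n \in [0,x]$ with $\norm{w_n} \ge n \cdot 2^n$. Then I set
\begin{align*}
    s_N \coloneqq \sum_{n=1}^{N} \frac{w_n}{2^n}.
\end{align*}
Each summand lies in $[0, x/2^n]$, so $s_N \ge 0$ and, by a telescoping comparison,
\begin{align*}
    s_N \le \sum_{n=1}^{N} \frac{x}{2^n} = \left(1 - \frac{1}{2^N}\right) x \le x,
\end{align*}
which places $s_N$ in $[0,x]$. The difference $s_{N+1} - s_N = w_{N+1}/2^{N+1}$ is positive, so $(s_N)$ is increasing.

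Finally, I would show unboundedness using that $\norm{s_{N} - s_{N-1}} = \norm{w_N}/2^N \ge N$ by the choice of $w_N$: if $(s_N)$ were norm bounded by some $M$, then the triangle inequality would give $\norm{s_N - s_{N-1}} \le 2M$ for all $N$, a contradiction. Hence $(s_N)$ is an increasing sequence in $[0,x]$ that is not norm bounded. No substantial obstacle is expected; the only subtlety is calibrating the growth rate of $\norm{w_n}$ against the geometric weights so that the partial sums stay in $[0,x]$ while the increments grow without bound.
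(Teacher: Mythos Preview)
Your proof is correct and follows essentially the same approach as the paper: both choose elements of rapidly growing norm from scaled copies of the order interval and form partial sums with geometric weights so that the resulting increasing sequence stays in $[0,x]$ but is not norm bounded. The only cosmetic difference is that the paper picks $x_n \in [0,x/2^n]$ with $\norm{x_n}=3^n$ and bounds $\norm{y_{n+1}}$ from below directly, whereas you pick $w_n \in [0,x]$ with $\norm{w_n}\ge n\cdot 2^n$ and argue via unbounded increments; both variants work equally well.
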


\begin{proof}
	For each integer $n \ge 1$ the order interval $[0,\frac{x}{2^n}]$ is not norm bounded, 
	so we can find a vector $x_n \in [0,\frac{x}{2^n}]$ of norm $\norm{x_n} = 3^n$. 
	Define $y_n := \sum_{k=1}^n x_k$ for each $n \ge 1$. 
	The the sequence $(y_n)$ is increasing and contained in $[0,x]$. 
	Moreover, for each $n \ge 1$ the geometric sum formula gives $\norm{y_n} \le \frac{3^{n+1}}{2} = \frac{\norm{x_{n+1}}}{2}$
	and thus $\norm{y_{n+1}} \ge \norm{x_{n+1}}-\norm{y_n} \ge \frac{\norm{x_{n+1}}}{2} \to \infty$ as $n \to \infty$.
\end{proof}

\begin{proof}[Proof of Theorem~\ref{thm:no-dini}]
	It suffices to show the existence of a decreasing sequence in $X_+$ which converges weakly but not in norm. 
	This is technically a bit easier to write down.
	
	As $X_+$ is not normal, there exists an $x \in X_+$ such that the order interval $[0,x]$ is not norm bounded 
	\cite[Theorem~2.40(1) and~(4) on p.\,87]{AliprantisTourky2007}. 
	According to Lemma~\ref{lem:non-normal} we find an increasing sequence in $[0,x]$ 
	that is not norm bounded and 
	by substracting this sequence from $x$ we get a decreasing sequence in $[0,x]$ that is not norm bounded. 
	By then choosing an appropriate subsequence we obtain a decreasing sequence $(x_n)$ in $[0,x]$ 
	such that $\big(\norm{x_n}\big)$ is increasing and converges to $\infty$. 
	It follows that the sequence $(y_n)$ in $X_+$ defined by $y_n := \frac{x_n}{\norm{x_n}}$ for each $n$ 
	is also decreasing. 
	Since all the $y_n$ are normalized, the sequence $(y_n)$ is not norm convergent to $0$, 
	so to conclude the proof it suffices to show that it converges weakly to $0$. 
	
	To this end, observe that $0 \le y_n \le \frac{x}{\norm{x_n}}$ for every $n$ 
	and that the right hand side of this inequality converges to $0$ in norm and thus, in particular, weakly. 
	Since the bidual wedge $X''_+$ is assumed to be a cone, 
	it follows from the characterization of that property in Proposition~\ref{prop:bidual-cone-dom} 
	that $y_n \to 0$ weakly.
\end{proof}

In addition to Open Problem~\ref{op:dini-normal} the discussion in this section 
leaves the following problem open, which is a bit more vague than the previous problems. 

\begin{open_problem}
	\label{op:dini-sequences}
	Characterize those ordered Banach spaces $X$ (with non-normal cone) 
	in which every increasing weakly convergent sequence is norm convergent.
\end{open_problem}

One such characterization was actually given in \cite[Theorem~2]{WongNg1975}: 
the property that every increasing weakly convergent sequence is norm convergent 
is equivalent to the formally weaker property 
that every increasing and weakly convergent sequence with norm convergent subsequence is itself norm convergent. 
While this is a remarkable observation, it seems unclear how this can be applied to concrete examples 
of ordered Banach spaces with non-normal cone. 
Thus, we consider Problem~\ref{op:dini-sequences} as not fully solved, yet; 
further characterizations (or criteria) that can be used to analyse concrete spaces still seem to be missing.
The negative criterion in Theorem~\ref{thm:no-dini} is one step in this direction.

\subsection*{Acknowledgements} 

It is my pleasure to thank Wolfgang Arendt for raising the question whether the assertion of Theorem~\ref{thm:separable} holds, 
for an inspiring discussion about the meaning of the notion \emph{order continuous norm} in ordered Banach spaces 
which motivated Theorem~\ref{thm:oc-norm}, 
and for bringing to my attention that Theorem~\ref{thm:separable} 
yields Corollary~\ref{cor:compact-operators-oc-norm-non-reflexive} as a consequence; 
this also inspired Corollary~\ref{cor:compact-operators-oc-norm}. 
Moreover, I would like to thank Julian Hölz for a fruitful discussion about the appropriate assumptions 
for a general version of Dini's theorem, which inspired Theorem~\ref{thm:dini}.

Some of the materials in Section~\ref{sec:dini} were contained in the lecture notes 
for a course on \emph{Ordered Banach Spaces and Positive Operators} that I taught at the University of Wuppertal 
in summer 2023.

\appendix

\section{Weak* density of the positive unit ball}
\label{app:weak-star-dense}

The following theorem is used in the proof of Proposition~\ref{prop:bidual-cone-dom} 
and it is also interesting in its own right. 
Since the proof uses quite a bit technology about polars in dual pairs of vector spaces 
that does not occur in the rest of the paper, we outsource the result to this appendix.

As a motivation first note that, if $X$ is a pre-ordered Banach space, then the wedge $X_+$ is weak* dense in the bidual wedge $X''_+$. 
This follows from the Hahn--Banach extension theorem in the locally convex space $X''$ endowed with the weak* topology. 
The following theorem shows that an even stronger approximation result is true. 
It can be considered as an order version of Goldstine's theorem 
(and it actually contains Goldstine's theorem as the special case where the wedge $X_+$ is equal to $X$).

\begin{theorem}
	\label{thm:positive-ball-bidual}
	Let $X$ be a pre-ordered Banach space and consider the positive parts of the closed unit balls 
	in $X$ and $X''$, given by  
	\begin{align*}
		B_+ := \{x \in X_+ \suchthat \norm{x} \le 1\} 
		\quad \text{and} \quad 
		B''_+ := \{x'' \in X''_+ \suchthat \norm{x''} \le 1 \}
		.
	\end{align*}
	Then $B_+$ is weak* dense in $B''_+$.
\end{theorem}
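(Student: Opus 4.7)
The plan is to prove $\overline{B_+}^{w^*} = B''_+$ (closure in $\sigma(X'', X')$) by a Hahn--Banach separation argument, with the delicate point being the lifting of a linear inequality from $X_+$ to the bidual wedge $X''_+$. The inclusion $\overline{B_+}^{w^*} \subseteq B''_+$ is immediate: $B''_+ = \{x'' \in X'' : \norm{x''} \le 1\} \cap X''_+$ is an intersection of two weak*-closed sets (the first by the Banach--Alaoglu theorem, the second because $X''_+$ is the polar cone of $-X'_+$), hence weak*-closed, and it visibly contains $B_+$.

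For the converse, I would assume for contradiction that some $x_0'' \in B''_+$ lies outside $\overline{B_+}^{w^*}$. Strict separation in the locally convex space $(X'', \sigma(X'', X'))$ then yields an $x' \in X'$ and an $\alpha \in \bbR$ with
\[
	\sup_{y \in B_+} \langle y, x' \rangle \le \alpha < \langle x_0'', x' \rangle .
\]
Set $\beta \coloneqq \sup_{y \in B_+} \langle y, x' \rangle$; since $0 \in B_+$ one has $\beta \ge 0$, and rescaling by $\norm{y}$ for $y \in X_+ \setminus \{0\}$ gives the pointwise inequality $\langle y, x' \rangle \le \beta \norm{y}$ for every $y \in X_+$.

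The crux is to upgrade this inequality from $X_+$ to all of $X''_+$. For this I would consider $g \colon X'' \to \bbR$, $g(x'') \coloneqq \langle x'', x' \rangle - \beta \norm{x''}$, and observe that $g$ is weak*-upper semicontinuous: the first summand is weak*-continuous, while the bidual norm $\norm{x''} = \sup\{\langle x'', y' \rangle : y' \in X',\ \norm{y'} \le 1\}$ is a supremum of weak*-continuous functionals and hence weak*-lower semicontinuous. Thus $S \coloneqq \{x'' \in X'' : g(x'') \le 0\}$ is weak*-closed and, by the inequality just established, contains $X_+$. Invoking the weak*-density of $X_+$ in $X''_+$ noted in the paragraph preceding the theorem (itself a direct application of Hahn--Banach inside $(X'', \sigma(X'', X'))$), one deduces $X''_+ \subseteq \overline{X_+}^{w^*} \subseteq S$. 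Applied to $x_0''$ this yields $\langle x_0'', x' \rangle \le \beta \norm{x_0''} \le \beta \le \alpha$, contradicting the strict separation above.

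The main obstacle is precisely this lifting step. Without normality of the cone $X_+$, the dual wedge $X'_+$ need not be generating in $X'$, so one cannot decompose the separating functional $x'$ as a difference of two positive functionals and reduce to Goldstine's theorem componentwise on the positive and negative parts. The workaround above bypasses this by packaging the linear estimate as a \emph{single} weak*-upper-semicontinuous inequality on the bidual, whose sublevel set then extends automatically from $X_+$ to $X''_+$ using only the much weaker Hahn--Banach density of the positive wedge in the bidual positive wedge.
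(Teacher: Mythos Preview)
Your proof is correct and takes a genuinely different route from the paper's.

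The paper works entirely via polar calculus in the dual pairs $\langle X, X'\rangle$ and $\langle X'', X'\rangle$: it first proves a technical lemma showing that $B' + X'_+$ is weak*-closed in $X'$ and coincides with both the weak and the weak* closure of $\conv(B' \cup X'_+)$, then computes $B_+^{\circ}$ and $B_+^{\circ\circ}$ step by step and invokes the bipolar theorem to identify $B_+^{\circ\circ}$ with $B''_+$. Your argument bypasses all of this machinery. You separate directly by Hahn--Banach, extract the scalar inequality $\langle y, x'\rangle \le \beta\norm{y}$ on $X_+$, and then lift it to $X''_+$ by the single observation that $x'' \mapsto \langle x'', x'\rangle - \beta\norm{x''}$ is weak*-upper semicontinuous (using $\beta \ge 0$ and weak*-lower semicontinuity of the bidual norm), so its sublevel set is weak*-closed and hence contains $\overline{X_+}^{w^*} = X''_+$. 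This is more elementary and self-contained: it uses only the easy density of $X_+$ in $X''_+$ (already recorded in the paragraph before the theorem) and avoids the auxiliary lemma on $B' + X'_+$ altogether. The paper's approach, by contrast, situates the result within the systematic framework of polars, which may be preferable if one wants to reuse the intermediate computations elsewhere.
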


For the proof we use two ingredients. 
The first one is the following lemma. 

\begin{lemma}
	\label{lem:ball-plus-cone-dual}
	Let $X$ be a pre-ordered Banach space and let $B'$ denote the closed unit ball of the dual space $X'$. 
	\begin{enumerate}[label=\upshape(\alph*)]
		\item\label{lem:ball-plus-cone-dual:itm:closed} 
		The set $B' + X'_+$ in $X'$ is weak* closed.
		
		\item\label{lem:ball-plus-cone-dual:itm:closure} 
		In the space $X'$ the weak closure and the weak* closure of the convex hull $\conv(B' \cup X'_+)$ 
		coincide and are equal to $B' + X'_+$.
	\end{enumerate}
\end{lemma}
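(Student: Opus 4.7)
I would prove both parts by establishing, in this order, (a) directly from Banach--Alaoglu and the weak*-closedness of $X'_+$, and then (b) by squeezing the weak and weak* closures of $\conv(B' \cup X'_+)$ between its norm-closure and $B' + X'_+$, which part (a) guarantees to be weak*-closed.

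For part (a), I would invoke that $B'$ is weak*-compact by Banach--Alaoglu while $X'_+$ is weak*-closed, as recalled in the paper's introduction. The standard ``compact plus closed is closed'' argument then applies: given a net $(b_j + c_j)_{j\in J}$ with $b_j \in B'$, $c_j \in X'_+$ that converges weak* to some $z \in X'$, weak*-compactness of $B'$ lets me pass to a subnet along which $b_j \to b$ weak* for some $b \in B'$; then $c_j = (b_j + c_j) - b_j$ converges weak* to $z - b$, which lies in $X'_+$ by weak*-closedness. Hence $z = b + (z - b) \in B' + X'_+$.

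For part (b), I would first verify that $\conv(B' \cup X'_+) \subseteq B' + X'_+$: for $y_1 \in B'$, $y_2 \in X'_+$ and $\lambda \in [0,1]$ one has $\lambda y_1 \in B'$ (since $B'$ is balanced with $|\lambda| \le 1$) and $(1-\lambda) y_2 \in X'_+$ (since $X'_+$ is a wedge). Conversely, for arbitrary $b \in B'$ and $c \in X'_+$, the identity
\[
    \lambda b + c \;=\; \lambda \cdot b + (1-\lambda)\cdot \tfrac{c}{1-\lambda} \;\in\; \conv(B' \cup X'_+), \qquad \lambda \in (0,1),
\]
(using once more the wedge property of $X'_+$) shows that $b + c$ is a norm limit of elements of $\conv(B' \cup X'_+)$, so $B' + X'_+$ lies in the norm-closure $\overline{\conv(B' \cup X'_+)}^{\,\|\cdot\|}$. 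Combining these inclusions with part (a) -- which gives weak*-closedness of $B' + X'_+$ and hence weak-closedness, since on $X'$ the weak topology $\sigma(X',X'')$ is finer than the weak* topology $\sigma(X',X)$ -- yields
\[
    B' + X'_+ \;\subseteq\; \overline{\conv(B' \cup X'_+)}^{\,\|\cdot\|} \;\subseteq\; \overline{\conv(B' \cup X'_+)}^{\,w} \;\subseteq\; \overline{\conv(B' \cup X'_+)}^{\,w^*} \;\subseteq\; B' + X'_+,
\]
so all three closures coincide with $B' + X'_+$.

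The only mild subtlety is that $\conv(B' \cup X'_+)$ is typically strictly smaller than $B' + X'_+$: boundary vectors $b + c$ with $\|b\| = 1$ and $c \neq 0$ cannot be written as a convex combination of a point of $B'$ and a point of $X'_+$. The approximation $\lambda b + c \to b + c$ resolves this in norm, after which the argument reduces to Banach--Alaoglu together with the elementary fact that weak*-closed sets in $X'$ are weakly closed.
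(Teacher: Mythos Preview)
Your proposal is correct and follows essentially the same route as the paper: part~(a) via Banach--Alaoglu plus the ``compact + closed'' subnet argument, and part~(b) via the approximation $\lambda b + c \to b + c$ in norm together with the chain of inclusions between the norm, weak, and weak* closures. The only cosmetic difference is that you display the full chain with the norm closure explicitly, whereas the paper treats the inclusion $B' + X'_+ \subseteq \overline{\conv(B' \cup X'_+)}^{\,w}$ as a separate step.
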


\begin{proof}
	\ref{lem:ball-plus-cone-dual:itm:closed} 
	Let $(b'_j+x'_j)_{j \in J}$ be a net in $B'+X'_+$ that is weak* convergent to a point $y' \in X'$. 
	After replacing the net with a subnet we may, by the Banach--Alaoglu theorem, 
	assume that $(b'_j)_{j \in J}$ is weak* convergent to a point $b' \in B'$. 
	Hence, $x'_j \to y'-b' =: x'$ with respect to the weak* topology 
	and $x' \in X'_+$ since $X'_+$ is weak* closed.
	Thus, $y' = b'+x' \in B' + X'_+$.
	
	\ref{lem:ball-plus-cone-dual:itm:closure}
	The set $B'+X'_+$ is convex as the Minkowski sum of two convex sets
	and it contains $B' \cup X'_+$, so $\conv(B' \cup X'_+) \subseteq B'+X'_+$. 
	Thus, 
	\begin{align*}
		\overline{\conv(B' \cup X'_+)}^{\operatorname{w}} 
		\subseteq 
		\overline{\conv(B' \cup X'_+)}^{\operatorname{w}^*} 
		\subseteq 
		B'+X'_+
		,
	\end{align*}
	where the closures are taken with respect to the weak and the weak* topology, respectively, 
	and where the second inclusion holds since $B'+X'$ is weak* closed 
	according to~\ref{lem:ball-plus-cone-dual:itm:closed}.
	
	It only remains to show that $B'+X'_+ \subseteq \overline{\conv(B' \cup X'_+)}^{\operatorname{w}}$, 
	so consider a vector $b'+x' \in B'+X'_+$. 
	For every number $\delta \in (0,1)$ the vector 
	\begin{align*}
		\delta b' + x' = \delta b' + (1-\delta) \frac{x}{1-\delta}
	\end{align*}
	is located in $\conv(B' \cup X'_+)$, 
	so by taking the limit for $\delta \uparrow 1$
	we see that $b' + x'$ is in the norm closure and thus in the weak closure of this set.
\end{proof}

The second ingredient that we need for the proof of Theorem~\ref{thm:positive-ball-bidual} 
is the concept of \emph{polars} in dual pairs of vector spaces. 
Let $V$, $W$ be real vector spaces and let $\langle \argument, \argument \rangle: V \times W \to \bbR$ 
be a bilinear map such that the following holds for all $v_0 \in V$ and all $w_0 \in W$: 
the equality $\langle v_0, w \rangle = 0$ for all $w \in W$ implies $v_0 = 0$ 
and the equality $\langle v, w_0 \rangle = 0$ for all $v \in V$ implies $w_0 = 0$. 
We shall describe this situation by speaking of the \emph{dual pair} $\langle V, W\rangle$. 
For a dual pair $\langle V,W \rangle$, the space $V$ induces a Hausdorff locally convex topology on $W$ 
and vice versa.
In the proof of Theorem~\ref{thm:positive-ball-bidual} we will be interesting 
in the dual pairs $\langle X, X' \rangle$ and $\langle X'', X' \rangle$ for a Banach space $X$.

Let $\langle V, W\rangle$ be a dual pair. 
For every subset $C \subseteq V$ the \emph{polar} $C^\circ$ of $C$ is defined to be the subset
\begin{align*}
	C^\circ := \{w \in W \suchthat \langle c,w \rangle \le 1 \text{ for all } c \in C\}
\end{align*}
of $W$ 
and for every subset $D \subseteq W$ the \emph{polar} $D^\circ$ of $D$ is defined to be the subset
\begin{align*}
	D^\circ := \{v \in v \suchthat \langle v,d \rangle \le 1 \text{ for all } d \in D\}
\end{align*}
of $V$. 
Naturally, we can thus also define the \emph{bipolars} $C^{\circ\circ} := (C^\circ)^\circ \subseteq V$ 
and $D^{\circ\circ} := (D^\circ)^\circ \subseteq W$. 
Note that in the definition of polars we did not use absolute values around the terms $\langle c,w \rangle$ and $\langle v,d \rangle$. 
There is also a symmetric version of polars that uses absolute values; 
we follow the convention from \cite[pp.\,125--126]{SchaeferWolff1999}, though, 
which is better suited for working with cones and wedges.
We will need the following properties, 
where we endow $V$ with the topology induced by $W$ and vice versa.

\begin{enumerate}[label=(\alph*)]
	\item\label{polars:itm:union} 
	For two subsets $D_1, D_2 \subseteq W$ one has $(D_1 \cup D_2)^\circ = D_1^\circ \cap D_2^\circ$; 
	see \cite[property~3 in paragraph~IV.1.3 on the bottom of p.\,125]{SchaeferWolff1999}. 
	
	\item\label{polars:itm:intersection} 
	If $C_1, C_2 \subseteq V$ are convex and closed and both sets contain $0$, 
	then $(C_1 \cap C_2)^\circ$ is the closure of the convex hull of $C_1^\circ \cup C_2^\circ$ in $W$; 
	see \cite[Corollary~2 in paragraph~IV.1.5 on p.\,126]{SchaeferWolff1999}.
	
	\item\label{polars:itm:bipolar} 
	If $C \subseteq V$ is convex and contains $0$, 
	then the bipolar $C^{\circ \circ}$ is the closure of $C$; 
	this is a special case of the bipolar theorem \cite[paragraph~IV.1.5 on p.\,126]{SchaeferWolff1999}. 
	
	\item\label{polars:itm:closed-convex-hull} 
	For every $D \subseteq W$ the polar $D^\circ$ coincides with the polar of the closure of the convex hull of $D$; 
	indeed, this is a simple consequence of the definition of the polar.
\end{enumerate}

\begin{proof}[Proof of Theorem~\ref{thm:positive-ball-bidual}]
	Throughout the proof we will identify $X$ with a subspace of $X''$ in the canonical way 
	and we will use following simple but important observation:
	for every set $C \subseteq X$ the polar of $C$ 
	with respect to the dual pair $\langle X,X'\rangle$ 
	is a subset of $X'$ that coincides with the polar of $C$ 
	with respect to the dual pair $\langle X'',X' \rangle$. 
	Hence, we can unambiguously denote this polar by $C^\circ$.
	Let $B$, $B'$, and $B''$ denote the closed unit balls in $X$, $X'$, and $X''$, respectively.
	We proceed in several steps:
	
	\emph{Step~1:} 
	We observe that $B_+^\circ$ is the weak* closure of $\conv(B' \cup -X'_+)$ in $X'$. 
	
	Indeed, the set $B_+$ is the intersection of the weakly closed subsets $B$ and $X_+$ of $X$, 
	so applying property~\ref{polars:itm:intersection} listed before the proof 
	to the dual pair $\langle X, X' \rangle$ shows that
	$B_+^\circ$ is the weak* closure of $\conv(B^\circ \cup X_+^\circ)$ in $X'$. 
	It is not difficult to check that $B^\circ = B'$ and $X_+^\circ = -X'_+$, so the claim follows.
	
	\emph{Step~2:} 
	We show that, with respect to the dual pair $\langle X'', X'\rangle$, 
	one has $B_+^{\circ\circ} = (B' \cup -X'_+)^\circ$. 
	
	Since it does not make a difference to take the polar $B_+^\circ$ with respect to the dual pair $\langle X, X' \rangle$ 
	instead of $\langle X'', X' \rangle$
	(see the beginning of the proof), 
	it follows from Step~1 that $B_+^\circ$ is the weak* closure of $\conv(B' \cup -X'_+)$ in $X'$. 
	By applying Lemma~\ref{lem:ball-plus-cone-dual}\ref{lem:ball-plus-cone-dual:itm:closure} 
	to the space $X$ ordered by the cone $-X_+$ we see that this weak* closure actually coincides with the weak closure, 
	so $B_+^\circ = \overline{\conv(B' \cup -X'_+)}^{\operatorname{w}}$ in $X'$. 
	So by applying property~\ref{polars:itm:closed-convex-hull} listed before the proof 
	to the dual pair $\langle X'', X' \rangle$ we get the claimed equality.
	
	\emph{Step~3:} 
	We prove that $B_+$ is weak* dense in $B''_+$, which completes the proof. 
	
	To this end note that we have, with respect to the dual pair $\langle X'', X'\rangle$,
	\begin{align*}
		B_+^{\circ\circ} 
		= 
		(B' \cup -X'_+)^\circ 
		= 
		(B')^\circ \cap (-X'_+)^\circ
		,
	\end{align*}
	where the first equality was shown in Step~2 
	and the second equality follows from property~\ref{polars:itm:union} listed before the proof. 
	It is straighforward to check that $(B')^\circ = B''$ and $(-X'_+)^\circ = X''_+$, 
	so 
	\begin{align*}
		B_+^{\circ\circ} = B'' \cap X''_+ = B''_+.	
	\end{align*}
	The bipolar theorem in the dual pair $\langle X'', X' \rangle$ 
	shows that $(B_+)^{\circ\circ}$ is the weak* closure of $B_+$, 
	see property~\ref{polars:itm:bipolar} listed before the proof. 
	So $B''_+$ is the weak* closure of $B_+$, as claimed.
\end{proof}

\bibliographystyle{plain}
\bibliography{literature}

\end{document}